\newif\ifarxiv\arxivfalse
			\let\cl@chapter\undefined
	\newcommandx\p[3][1=\gamma,3=h]{\prox_{#1#2}^{#3}}
	\newcommand\C{\mathcal C}
		\newcommand\D[1][h_i]{%
			\@ifnextchar_{\operatorname D}{%
				\ifstrempty{#1}{\operatorname D}{\operatorname D_{#1}}%
			}
		}
	\newcommandx\FBE[2][1={f,g},2={\gamma h_i}]{\varphi_{#2}^{#1}}
	\newcommandx\T[2][2={\nicefrac{h_i}{\gamma_i^k}},1={ }]{\operatorname T_{#2}^{#1}}
	\newcommandx\M[2][2={\nicefrac{h_i}{\gamma_i^k}},1={ }]{\operatorname{\mathcal{M}}_{#2}^{#1}}
	\newcommandx\TG[2][1={f,g},2={\gamma^k h_i}]{\operatorname T_{#2}^{#1}}
	\newcommandx\RGH[2][1={f,g},2={\gamma h_i}]{\operatorname R_{#2}^{#1}}
	\let\proj\relax
	\DeclareMathOperator\proj{Proj}
		\newcommand\refBIBPA[1][ (\cref{alg:bibpa})]{%
\newcommand{\TheKeywords}{%
	Nonsmooth nonconvex optimization,
	Block Bregman proximal algorithm,
	Inertial effects,
    block relative smoothness,
    Symmetric nonnegative matrix tri-factorization%
}
\newcommand{\TheSubjclass}{%
	90C06, 
	90C25, 
	90C26, 
	49J52, 
	49J53.
}
\newcommand{\TheTitle}{%
	A block inertial Bregman proximal algorithm for nonsmooth nonconvex problems with application to symmetric nonnegative matrix tri-factorization%
}
\newcommand{\TheShortTitle}{%
	A block inertial Bregman proximal algorithm%
}
\newcommand{\TheShortAuthor}{%
	M. Ahookhosh, L.T.K. Hien, N. Gillis, and P. Patrinos%
}
\newcommand{\TheFunding}{%
	MA and PP acknowledge the support by the \emph{Research Foundation Flanders (FWO)} research projects G086518N and G086318N;
	\emph{Research Council KU Leuven} C1 project No. C14/18/068;
	\emph{Fonds de la Recherche Scientifique - FNRS and the Fonds Wetenschappelijk Onderzoek - Vlaanderen} (FWO) under EOS project no 30468160 (SeLMA).
	NG  also acknowledges the support by the European Research Council (ERC starting grant no 679515).
}
\newcommand{\TheAddressMons}{%
	Department of Mathematics and Operational Research, Faculté polytechnique, Université de Mons.
	Rue de Houdain 9,
	7000 Mons,
	Belgium.
}
\newcommand{\TheAbstract}{%
	We propose \refBIBPA[], a block inertial Bregman proximal algorithm for minimizing the sum of a block relatively smooth function (that is, relatively smooth concerning each block) and block separable nonsmooth nonconvex functions. We prove that the sequence generated by \refBIBPA[] subsequentially converges to critical points of the objective under standard assumptions, and globally converges when the objective function is additionally assumed to satisfy the Kurdyka-{\L}ojasiewicz (K{{\L}}) property. We also provide the convergence rate when the objective satisfies the {\L}ojasiewicz inequality. We apply BIBPA to the symmetric nonnegative matrix tri-factorization (SymTriNMF) problem, where we propose kernel functions for SymTriNMF and provide closed-form solutions for subproblems of \refBIBPA[]. 
}
	\title[\TheShortTitle]{\TheTitle}
	\author[\TheShortAuthor]{%
		Masoud Ahookhosh\textsuperscript{1},\ 
		Le Thi Khanh Hien\textsuperscript{2}.\ 
		Nicolas Gillis\textsuperscript{2},\ and\ 
		Panagiotis Patrinos\textsuperscript{1}%
	}
	\thanks{\textsuperscript{1}\TheAddressKU}
	\thanks{\textsuperscript{2}\TheAddressMons}
	\thanks{\TheFunding}
	\keywords{\TheKeywords}
	\subjclass{\TheSubjclass}
\begin{document}

		\begin{abstract}
			\TheAbstract
		\end{abstract}

		\maketitle

\else

	\begin{document}
	\journalname{Journal of Optimization Theory and Applications}

	\title{\TheTitle}
	\titlerunning{\TheShortTitle}

	\author{%
		Masoud Ahookhosh\and
		Le Thi Khanh Hien\and
		Nicolas Gillis\and
		Panagiotis Patrinos%
	}
	\authorrunning{\TheShortAuthor}

	\institute{%
		M. Ahookhosh and P. Patrinos%
		\at
			\TheAddressKU.
			{\tt
				\{%
					\href{mailto:masoud.ahookhosh@esat.kuleuven.be}{masoud.ahookhosh},%
					\href{mailto:panos.patrinos@esat.kuleuven.be}{panos.patrinos}%
				\}\href{mailto:masoud.ahookhosh@esat.kuleuven.be,panos.patrinos@esat.kuleuven.be}{@esat.kuleuven.be}%
			}%
		\and
		L. T. K. Hien and N. Gillis
		\at
			\TheAddressMons.
			{\tt
				\{%
					\href{mailto:ThiKhanhHien.LE@umons.ac.be>}{ThiKhanhHien.LE},%
					\href{mailto:nicolas.gillis@umons.ac.be}{nicolas.gillis}%
				\}%
				\href{mailto:ThiKhanhHien.LE@umons.ac.be,nicolas.gillis@umons.ac.be}{@umons.ac.be}%
			}%
	}%

	\maketitle
    \vspace{-5mm}
	\begin{abstract}
		\TheAbstract
		\keywords{\TheKeywords}%
		\subclass{\TheSubjclass}%
	\end{abstract}
\fi

    \vspace{-4mm}	
	\section{Introduction} \label{sec:intro}
	This paper is concerned with the minimization of the sum of a block relatively smooth (see \Cref{def:mbRelSmooth}), and a block separable (nonsmooth) nonconvex function. 
	Although this problem has a simple structure, it covers a broad range of optimization problems arising in signal and image processing, machine learning, and inverse problems.
	In our block-structured nonconvex setting, the most common class of methodologies is \DEF{first-order} ones, where the central to their convergence analysis is the so-called \DEF{descent lemma} in both the Euclidean setting (e.g., \cite{ahookhosh2019accelerated,bauschke2017convex,beck2017first,nesterov2003introductory,nesterov2015universal}) and the non-Euclidean one (e.g., \cite{bauschke2016descent,lu2018relatively,van2017forward}). While for the Euclidean case, the descent lemma is guaranteed if the function has Lipschitz continuous gradients, in the non-Euclidean setting it holds for \DEF{relatively smooth} functions encompassing the class of smooth functions with Lipschitz gradients. 
	
	In the \DEF{Euclidean setting}, there are large number alternating minimization algorithms for handling our structured problem such as block coordinate methods \cite{beck2015cyclic,beck2013convergence,nesterov2012efficiency,tseng2001convergence,tseng2009coordinate} and Gauss-Seidel methods \cite{auslender1976optimisation,bertsekas1989parallel,grippo2000convergence}, proximal alternating minimization \cite{attouch2010proximal,attouch2007new}, and proximal alternating linearized minimization \cite{bolte2014proximal,pock2016inertial,shefi2016rate}. In the \DEF{non-Euclidean setting}, several algorithms have been proposed, namely, Bregman forward-backward splitting 
	 \cite{ahookhosh2019bregman,bauschke2019linear,bauschke2016descent,bolte2018first,lu2018relatively,teboulle2018simplified}, accelerated Bregman forward-backward splitting \cite{gutman2018unified,hanzely2018accelerated}, stochastic mirror descent methods \cite{hanzely2018fastest}, Bregman proximal alternating linearized minimization \cite{ahookhosh2019multi}.
	
	In order to establish the global convergence of generic algorithms for (nonsmooth) nonconvex problems, one needs to assume that the celebrated Kurdyka-{\L}ojasiewicz inequality (see \Cref{def:klFunctions1}) is satisfied as a feature of the underlying problem's class. The earliest abstract convergence theorem was  introduced by Attouch et al. \cite{attouch2013convergence} and by Bolte et al. \cite{bolte2014proximal}, relying on the following conditions that an algorithm should satisfy: 
	(i)~\DEF{sufficient decrease condition of the cost function}; 
	(ii)~\DEF{subgradient lower bound of iterations gap}; 
	(iii)~\DEF{subsequential convergence}. These conditions are shown to be satisfied by many algorithms~\cite{attouch2013convergence}. In \cite{bolte2014proximal}, these conditions were extended for proximal alternating linearized minimization. In the case of inertial proximal point algorithms \cite{ochs2014ipiano,pock2016inertial}, it was shown that some Lyapunov function satisfies the sufficient decrease condition, which leads to a generalization of the abstract convergence theorem. A generalization of this theorem was introduced for variable metric algorithms in \cite{frankel2015splitting}, which has been recently extended for inertial variable metric algorithms \cite{ochs2019unifying}. In this paper, we show that the results of \cite{ochs2019unifying} can cover the global convergence of algorithms in non-Euclidean settings.

	\vspace{-5mm}
	\subsection{Contribution}\label{sec:contribution} 
	
	 Our contribution is twofold: 
	 \begin{enumerate}[wide, labelwidth=!, labelindent=0pt] 
	     \item (\DEF{Block inertial Bregman proximal algorithm}) We introduce \refBIBPA[], a block generalization of the Bregman proximal gradient method \cite{bolte2014proximal} \emph{with an inertial force}. 
	     We extend the notion of relative smoothness \cite{bauschke2016descent,lu2018relatively,van2017forward} to its block version (with different kernels for each block) to support our structured nonconvex problems. 
	     Notably, these kernel functions are block-wise convex, a property that does not necessarily imply their joint convexity for all blocks. 
	     Unlike the global convergence theorem in \cite{attouch2013convergence,bolte2014proximal} that verifies the \DEF{sufficient decrease condition} and  \DEF{subgradient lower bound of iterations gap} on the cost function, for \refBIBPA[] these properties hold for a \DEF{Lyapunov function} including Bregman terms (see the equation \eqref{eq:LyapunovFunc}). 
	     Then, the global convergence of \refBIBPA[] is studied under the K{{\L}} property, and its convergence rate is studied for {\L}ojasiewicz-type K{{\L}} functions. 
	   
	     \item (\DEF{Globally convergent scheme for solving the SymTriNMF problem})  With appropriate selection of kernel functions for Bregman distances, it turns out that the objective of the \DEF{symmetric nonnegative matrix tri-factorization} (SymTriNMF) problem is block relatively smooth, and the corresponding subproblems can be solved in closed form, an important property when dealing with machine learning problems that include a large number of variables. To the best of our knowledge, \refBIBPA[] is the first scheme with a rigorous theoretical guarantee of convergence for the SymTriNMF problem.
	 \end{enumerate}

	\vspace{-9mm}
	\subsection{Related works}\label{sec:relativeWorks} 
	There are three papers \cite{ahookhosh2019multi,wang2018block,zhang2019inertial} that are closely related to this paper.
	In \cite{ahookhosh2019multi}, we introduced a multi-block relative smoothness condition that exploits a single kernel function for all blocks, while in the current paper we assume a block relative smoothness condition allowing a different kernel function for each block. Moreover, our algorithm \refBIBPA[] involves dynamic step-sizes and inertial terms for each block that makes our derivation and analysis different from those of \cite{ahookhosh2019multi}. Beside of the algorithmic differences with \cite{wang2018block}, we use nonseparable (nonconvex) kernels as apposed to the separable convex kernel used in \cite{wang2018block} for each block.
	An inertial Bregman proximal gradient algorithm was presented in \cite{zhang2019inertial} for composite minimization that does not support our block structure nonconvex problems and therefore is different in derivation and analysis concerning our work.

	\vspace{-8mm}
	\subsection{Organization}\label{sec:organization} 
	The remainder of this paper is organized as follows.  While \Cref{sec:probRS} discusses the problem statement and the block relative smoothness, \Cref{sec:unipa} introduces and analyzes a block inertial Bregman proximal algorithm (\refBIBPA[]). In \Cref{sec:SNMF}, it is shown the \refBIBPA[]'s subproblems are solved in closed form. 
	Some conclusion are delivered in \Cref{sec:conclusion}.
	
	\section{Problem statement and block relative smoothness}\label{sec:probRS}
	We consider the structured nonsmooth nonconvex minimization problem
	\begin{equation}\label{eq:P}
		\minimize_{\bm x\in \overline{C}}~~\Phi(\bm x)\equiv f(\bm x)+\sum_{i=1}^N g_i(x_i),
	\end{equation}
	where $C$ is a nonempty, convex, and open set in $\R^n$ and $\overline{C}$ denotes its closure.  Setting  $n=\sum_{i=1}^N n_i$ and $i=1,\ldots,N$, we assume the following hypotheses:
	\begin{ass}[requirements for composite minimization \eqref{eq:P}]\label{ass:basic:fgh}~
	\begin{enumeratass}
		\item\label{ass:basic:g}%
			\(\func {g_i}{\R^{n_i}}{\Rinf\coloneqq\R\cup\set\infty}\) is proper and lower semicontinuous (lsc); 
			\item\label{ass:basic:h}%
			\(\func{h_i}{\R^{n}}{\Rinf}\) is $i$-th block Legandre,  $\interior\dom h_1=\ldots=\interior\dom h_N$, $\overline{C}\subseteq\overline{\dom h_1}$, and $\dom g\cap C\neq\emptyset$ with $g:=\sum_{i=1}^N g_i$;
		\item\label{ass:basic:f}%
			\(\func{f}{\R^{n}}{\Rinf}\) is $\C^1(\interior\dom h_1)$ and \DEF{$(L_1,\ldots,L_N)$-smooth relative to $(h_1,\ldots,h_N)$};
		\item\label{ass:basic:argmin}%
	  $\argmin \set{\Phi(x) \mid \bm x\in \overline{C}}\neq\emptyset$.
	\end{enumeratass}
	\end{ass}
	
	
	\vspace{-5mm}
	\subsection{Notation}\label{sec:notation}
	We denote by $\Rinf\coloneqq\R\cup\set{\infty}$ the extended-real line. 
	We use boldface lower-case letters (e.g., $\bm x$, $\bm y$, $\bm z$) for vectors in $\R^{n}$ and use normal lower-case letters (e.g., $z_i$, $x_i$, $y_i$) for vectors in $\R^{n_i}$, for $n_i\in \N$. For the identity matrix $I_n$, we set $U_i\in\R^{n\times n_i}$ such that
	 $I_n=(U_1,\ldots,U_N)\in\R^{n\times n}$. 
	The set of cluster points of \(\seq{\bm x^k}\) is denoted as \(\omega(\bm x^0)\).
	A function $\func{f}{\R^n}{\Rinf}$ is \DEF{proper} if $f>-\infty$ and $f\not\equiv\infty$, in which case its \DEF{domain} is defined as the set
	$\dom f\coloneqq\set{\bm x\in\R^n}[f(\bm x)<\infty]$. 
	A vector $\bm v\in\partial f(\bm x)$ is a \DEF{subgradient} of $f$ at $\bm x$, and the set of all such vectors is called the \DEF{subdifferential} $\partial f(\bm x)$ \cite[Definition 8.3]{rockafellar2011variational}, i.e. 
	\begin{align*}
		\partial f(\bm x)
	{}={} &
		\set{\bm v\in\R^n}[
			\exists\seq{\bm x^k,\bm v^k}~\text{s.t.}~ \bm x^k\to \bm x,~f(\bm x^k)\to f(\bm x),~
			\widehat\partial f(\bm x^k)\ni \bm v^k\to \bm v
		],
	\shortintertext{%
		where $\widehat\partial f(\bm x)$ is the set of \DEF{regular subgradients} of $f$ at $\bm x$, namely
	}
		\widehat\partial f(\bm x)
	{}={} &
		\set{\bm v\in\R^n}[
			f(\bm z)\geq f(\bm x){}+{}\innprod{\bm v}{\bm z-\bm x}{}+{}o(\|\bm z-\bm x\|),~
			\forall \bm z\in\R^n\vphantom{\seq{\bm x^k}}
		].
	\end{align*} 
	
	\vspace{-2mm}
	\subsection{Block relative smoothness}\label{sec:relSmooth}
	We first describe the notion of \DEF{block relative smoothness}, which is an extension of the relative smoothness \cite{bauschke2016descent,lu2018relatively}. To this end, we introduce the notion of \DEF{block kernel} functions, which coincides with the classical one (cf. \cite[Definition 2.1]{ahookhosh2019bregman}) for $N=1$.
	
	\begin{defin}[$i$-th block convexity and kernel function]\label{def:kernel}%
		Let \(\func{h}{\R^n}{\Rinf}\) be a proper and lower semicontinuous (lsc) function with \(\interior\dom h\neq\emptyset\) and such that  $h\in\C^1(\interior\dom h)$.
		For a fixed vector $\bm x\in\R^n$ and $i\in\set{1,\ldots,N}$, we say that \(h\) is
		\begin{enumerate}
		\item 
		     \DEF{$i$-th block (strongly/strictly) convex} if the function $h(\bm x+U_i(\cdot -x_i))$ is (strongly/strictly) convex for all $\bm x\in \dom h$;
		\item
			a \DEF{$i$-th block kernel function} if $h$ is $i$-th block convex and $h(\bm x+U_i(\cdot -x_i))$ is $1$-coercive for all $\bm x\in \dom h$, i.e., $\lim_{\|z\|\to\infty}\tfrac{h(\bm x+U_i(z -x_i))}{\|z\|}=\infty$;
		\item
			\DEF{$i$-th block essentially smooth}, if for every sequence $\seq{\bm x^k}\subseteq\interior\dom h$ converging to a boundary point of $\dom h$, we have $\|\nabla_i h(\bm x^k)\|\to\infty$;
		\item\label{def:kernel4}
			\DEF{$i$-th block Legendre} if it is $i$-th block essentially smooth and $i$-th block strictly convex.
		\end{enumerate}
	\end{defin}
	
	Let $\func{h}{\R^n}{\Rinf}$ be a Legendre function. Then, the classical definition of \DEF{Bregman distances} (cf. \cite{bregman1967relaxation}) leads to the function $\func{\D_{h}}{\R^n\times\R^n}{\Rinf}$ given by
	\begin{equation}\label{eq:bregman}
			\D_{h}(\bm y,\bm x)
		{}\coloneqq{}
			\begin{ifcases}
				h(\bm y)-h(\bm x)-\innprod{\nabla h(\bm x)}{\bm y-\bm x} &\bm y\in\dom h, \bm x\in\interior\dom h
			\\
				\infty\otherwise.
			\end{ifcases}
	\end{equation}
	However, in the remainder of this paper, we extend this definition for the cases that $h$ is only an $i$-th block Legendre function.
	Fixing all blocks except the $i$-th one, the Bregman distance \eqref{eq:bregman} will reduce to
	$   \D_{h}(\bm x+U_i(y_i-x_i),\bm x)=h(\bm x+U_i(y_i-x_i))-h(\bm x)-\langle\nabla_i h(\bm x),y_i-x_i\rangle,
	$ which measures the proximity between $\bm x+U_i(y_i-x_i)$ and $\bm x$ with respect to the $i$-th block of variables.
	Moreover, the kernel \(h\) is $i$-th block convex if and only if \(\D_{h}(\bm x+U_i(y_i-x_i),\bm x)\geq 0\) for all \(\bm x+U_i(y_i-x_i)\in\dom h\) and \(\bm x\in\interior \dom h\). 
	Note that if \(h\) is $i$-th block strictly convex, then $\D_{h}(\bm x+U_i(y_i-x_i),\bm x)= 0$ if and only if $x_i=y_i$. 
	
	We are now in a position to present the notion of \DEF{block relative smoothness}, which is the central tool for our analysis in the next section. 
	
	\begin{defin}[block relative smoothness]
	\label{def:mbRelSmooth}
	For $i\in[N]$, let \(\func{h_i}{\R^n}{\Rinf}\) be $i$-th block kernel functions and let \(\func{f}{\R^n}{\Rinf}\) be a proper and lsc function. 
	If there exists $L_i> 0$, $i\in [N]$, such that 
	$L_ih_i(\bm x+U_i(z-x_i))-f(\bm x+U_i(z-x_i))$
	are convex for all $\bm x, \bm x+U_i(z-x_i)\in \interior\dom h_i$, then, $f$ is called \DEF{$(L_1,\ldots,L_N)$-smooth relative to $(h_1,\ldots,h_N)$}.
	\end{defin}
	
	Note that if $N=1$, the block relative smoothness is reduced to standard relative smoothness \cite{bauschke2016descent,lu2018relatively}. 
	If \(f\) is \(L\)-Lipschitz continuous, then both \(\nicefrac{L}{2}\|\cdot\|^2-f\) and \(\nicefrac{L}{2}\|\cdot\|^2+f\) are convex, i.e., the relative smoothness of \(f\) generalizes the notions of Lipschitz continuity.
	
	
	\begin{prop}[characterization of block relative smoothness] 
	\label{fac:relSmoothEqvi}
	    For $i=1,\ldots,N$, let \(\func{h_i}{\R^n}{\Rinf}\) be $i$-th block kernels and let \(\func{f}{\R^n}{\Rinf}\) be a proper 
	    lsc function and \(f\in\mathcal{C}^1\). Then, the following statements are equivalent:
	    \begin{enumerateq}
	    \item \label{fac:relSmoothEqvi1}
	    $(L_1,\ldots,L_N)$-smooth relative to $(h_1,\ldots,h_N)$; 
	    \item \label{fac:relSmoothEqvi2}
	    for all \((\bm x,\bm y)\in\interior\dom h_i\times \interior\dom h_i\) and $i=1,\ldots,N$, 
	    \begin{equation}\label{eq:upperIneq}
	        f(\bm x+U_i(y_i-x_i)) \leq f(\bm x)+\innprod{\nabla_i f(\bm x)}{y_i-x_i}+L_i \D(\bm x+U_i(y_i-x_i),\bm x);
	    \end{equation}
	    \item  \label{fac:relSmoothEqvi3}
	    for all \((\bm x,\bm y)\in\interior\dom h_i\times \interior\dom h_i\) and $i=1,\ldots,N$,
	    \begin{equation}\label{eq:upperEqvi}
	        \innprod{\nabla_i f(\bm x)-\nabla_i f(\bm y)}{x_i-y_i}\leq L_i \innprod{\nabla_i h_i(\bm x)-\nabla_i h_i(\bm y)}{x_i-y_i};
	    \end{equation}
	    \item \label{fac:relSmoothEqvi4}
	    if \(f\in\mathcal{C}^2(\interior\dom f)\) and 
	    \(h\in\mathcal{C}^2(\mathrm{\bf int}\dom h_i)\), then 
	    \begin{equation}\label{eq:descentEqviC2}
	        L_i \nabla_{x_ix_i}^2 h_i(\bm x)-\nabla_{x_ix_i}^2 f(\bm x)\succeq 0,\quad \forall \bm x\in\interior\dom h_i,~i=1,\ldots,N.
	    \end{equation}
	    \end{enumerateq}
	\end{prop}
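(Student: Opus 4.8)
The plan is to reduce every one of the four assertions to a statement about the \emph{one-block restriction} of $f$ and $h_i$, where the conditions collapse to the textbook characterizations of convexity of a single $\mathcal C^1$ (resp.\ $\mathcal C^2$) function; this is the blockwise specialization of the familiar relative-smoothness template of \cite{bauschke2016descent,lu2018relatively}. Fix $i\in[N]$ and a base point $\bm x$, and introduce the slice functions $F(z):=f(\bm x+U_i(z-x_i))$ and $H(z):=h_i(\bm x+U_i(z-x_i))$ for $z$ in the set $S_{\bm x}:=\set{z\in\R^{n_i}}[\bm x+U_i(z-x_i)\in\interior\dom h_i]$. By the chain rule one has $\nabla F(z)=\nabla_i f(\bm x+U_i(z-x_i))$ and $\nabla H(z)=\nabla_i h_i(\bm x+U_i(z-x_i))$, and in the twice-differentiable case $\nabla^2 F(z)=\nabla_{x_ix_i}^2 f(\bm x+U_i(z-x_i))$ and $\nabla^2 H(z)=\nabla_{x_ix_i}^2 h_i(\bm x+U_i(z-x_i))$. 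With this notation, \Cref{def:mbRelSmooth} says precisely that the difference $g_{\bm x}:=L_iH-F$ is convex on $S_{\bm x}$ for every admissible $\bm x$ and every $i$. This single convexity is the hub through which I would connect all four items, rather than proving a cycle of implications.

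I would then feed $g_{\bm x}$ into the standard first- and second-order convexity tests and translate back. For the equivalence with \eqref{eq:upperIneq}: convexity of $g_{\bm x}$ is equivalent to the gradient inequality $g_{\bm x}(y_i)\ge g_{\bm x}(x_i)+\innprod{\nabla g_{\bm x}(x_i)}{y_i-x_i}$ for all $x_i,y_i\in S_{\bm x}$, and substituting $g_{\bm x}=L_iH-F$ while recognizing $H(y_i)-H(x_i)-\innprod{\nabla H(x_i)}{y_i-x_i}=\D(\bm x+U_i(y_i-x_i),\bm x)$ from \eqref{eq:bregman} rearranges this into exactly \eqref{eq:upperIneq}; letting the base point (including its $i$-th block) range over $\interior\dom h_i$ recovers the full quantifier. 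For the equivalence with \eqref{eq:upperEqvi}: convexity of $g_{\bm x}$ is equivalent to monotonicity of $\nabla g_{\bm x}$, i.e.\ $\innprod{\nabla g_{\bm x}(x_i)-\nabla g_{\bm x}(y_i)}{x_i-y_i}\ge0$, and expanding $\nabla g_{\bm x}=L_i\nabla H-\nabla F$ and moving the $f$-terms to the left yields \eqref{eq:upperEqvi}, where the two evaluation points differ only in their $i$-th block. For the equivalence with \eqref{eq:descentEqviC2} under $\mathcal C^2$ regularity: convexity of $g_{\bm x}$ is equivalent to $\nabla^2 g_{\bm x}(z)\succeq0$ on $S_{\bm x}$, which is $L_i\nabla_{x_ix_i}^2 h_i-\nabla_{x_ix_i}^2 f\succeq0$ at the corresponding point of $\interior\dom h_i$; as $\bm x$ and $z$ vary, that point sweeps all of $\interior\dom h_i$, giving \eqref{eq:descentEqviC2}.

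The step needing the most care—and the one I expect to be the main obstacle—is bookkeeping the blockwise nature of these tests: the convexity characterization must be applied to the restriction $g_{\bm x}$ with the remaining $N-1$ blocks \emph{frozen}, and never to $f$ or $h_i$ jointly. This is most delicate in \eqref{eq:upperEqvi}, whose arguments $\bm x$ and $\bm y$ must be read as agreeing outside the $i$-th block, so that they correspond to $x_i,y_i$ in the same slice $S_{\bm x}$; the monotonicity obtained is that of the slice gradient $\nabla g_{\bm x}$ and not a joint monotonicity, in keeping with the observation in \Cref{sec:contribution} that block convexity of the kernels need not imply joint convexity. The remaining checks are routine: each $S_{\bm x}$ is open and convex, which follows from the $i$-th block convexity of $h_i$ in \Cref{def:kernel} (the slices of $\dom h_i$ are convex), and $F,H\in\mathcal C^1(S_{\bm x})$ so that the classical convexity characterizations are applicable. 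No genuinely new estimate is required beyond these reductions.
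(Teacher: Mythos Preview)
Your proposal is correct and matches the paper's approach exactly: the paper's proof is the single sentence ``The proof is a straightforward extension of those given in~\cite[Proposition 1.1]{lu2018relatively}, by fixing all the blocks except one of them,'' which is precisely your reduction to the slice functions $F,H$ followed by the standard first- and second-order convexity characterizations. Your added care about the bookkeeping in \eqref{eq:upperEqvi} (that $\bm x$ and $\bm y$ must agree outside the $i$-th block) is a genuine point the paper leaves implicit, and your treatment of it is sound.
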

	
	\begin{proof}
	The proof is a straightforward extension of those given in~\cite[Proposition 1.1]{lu2018relatively}, by fixing all the blocks except one of them.
	\end{proof}
	\subsection{Motivating example: symmetric nonnegative matrix tri-factorization}\label{sec:snmtf}
	We consider a symmetric matrix $X\in\R^{m\times m}$ and aim to decompose it in the form $X=UVU^T$, where $U\in\R_+^{m\times r}$ and $V\in\R_+^{r\times r}$. This translates to the minimization of $\tfrac{1}{2}\| X -UVU^T\|_F^2$ for $U, V\geq 0$, leading to the unconstrained problem
	\begin{equation}
	\label{eq:usnmtf}  
	\min_{U\in\R^{m\times r},V\in\R^{r\times r}} \tfrac{1}{2}\| X -UVU^T\|_F^2+\delta_{U\geq 0}+\delta_{V\geq 0}.
	\end{equation}
	
	\begin{prop}[block relative smoothness of SymTriNMF objective]
	\label{pro:relSmoothSNMF0}
	    Let functions $\func{h_1}{\R^{m\times r}\times\R^{r\times r}}{\Rinf}$ and $\func{h_2}{\R^{m\times r}\times\R^{r\times r}}{\Rinf}$ be strongly convex kernel functions as
	    \begin{align}
	        &\label{eq:kernelh4h21} h_1(U,V):=\tfrac{a_1}{4} \|V\|_F^2\|U\|_F^4+\tfrac{b_1}{2}\left(\|X\|_F \|V\|_F+\varepsilon_1\right) \|U\|_F^2, \\
	        &\label{eq:kernelh4h22} h_2(U,V):=\tfrac{a_2}{2} \left(\|U\|_F^4+\varepsilon_2\right)\|V\|_F^2.
	    \end{align}
	   with $\varepsilon_1, \varepsilon_2>0$. Then the function  $\func{f}{\R^{m\times r}\times \R^{r\times r}}{\Rinf}$ given by $f(U, V):=\tfrac{1}{2}\|X-UVU^T\|_F^2$ is $(L_1,L_2)$-smooth relative to $(h_1,h_2)$ with
	    \begin{equation}\label{eq:l1l2UpperBounds}
	        L_1\geq \max\set{\tfrac{6}{a_1},\tfrac{2}{b_1}}, 
	        \quad 
	        \text{ and } 
	        \quad 
	        L_2\geq \tfrac{1}{a_2}.
	    \end{equation}
	\end{prop}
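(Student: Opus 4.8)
The plan is to prove both relative-smoothness inequalities through the second-order characterization \eqref{eq:descentEqviC2} of \cref{fac:relSmoothEqvi}. Since $f$ is a polynomial (hence $\C^\infty$) and $h_1,h_2$ are $\C^2$ on the interiors of their domains, it suffices to establish the two blockwise operator inequalities $L_1\nabla^2_{UU}h_1(U,V)-\nabla^2_{UU}f(U,V)\succeq 0$ and $L_2\nabla^2_{VV}h_2(U,V)-\nabla^2_{VV}f(U,V)\succeq 0$ for all admissible $(U,V)$. I evaluate each Hessian as a quadratic form in an arbitrary direction $H$ of the matching block, and throughout I use the submultiplicative estimate $\|ABC\|_F\le\|A\|_F\|B\|_F\|C\|_F$ (via $\|\cdot\|_{\mathrm{op}}\le\|\cdot\|_F$) together with $\lambda_{\max}(U^TU)\le\|U\|_F^2$. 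As a byproduct these same Hessian bounds certify the blockwise strong convexity asserted for $h_1,h_2$.

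For the $V$-block the computation is short because $f(U,\cdot)$ is quadratic. I expand $f=\tfrac12\|X\|_F^2-\innprod{X}{UVU^T}+\tfrac12\|UVU^T\|_F^2$; only the last term contributes to $\nabla^2_{VV}f$, giving $\innprod{H}{\nabla^2_{VV}f\,[H]}=\|UHU^T\|_F^2\le\|U\|_F^4\|H\|_F^2$. Since $\nabla^2_{VV}h_2=a_2(\|U\|_F^4+\varepsilon_2)\,\mathrm{Id}$, the desired inequality $L_2a_2(\|U\|_F^4+\varepsilon_2)\ge\|U\|_F^4$ holds as soon as $L_2a_2\ge1$, i.e. $L_2\ge\tfrac1{a_2}$, which is the second bound in \eqref{eq:l1l2UpperBounds}.

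The $U$-block carries the real work, since $f(\cdot,V)$ is quartic. Differentiating the expansion twice in $U$ along $H$, the quadratic term $-\innprod{X}{UVU^T}$ contributes $-2\,\mathrm{tr}(XHVH^T)$, whose absolute value I bound by $2\|X\|_F\|V\|_F\|H\|_F^2$, while the quartic term $\tfrac12\|UVU^T\|_F^2$ contributes $\|HVU^T+UVH^T\|_F^2+2\innprod{UVU^T}{HVH^T}$, which I bound above by $4\|U\|_F^2\|V\|_F^2\|H\|_F^2+2\|U\|_F^2\|V\|_F^2\|H\|_F^2=6\|U\|_F^2\|V\|_F^2\|H\|_F^2$. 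On the kernel side, a direct computation gives $\innprod{H}{\nabla^2_{UU}h_1[H]}=a_1\|V\|_F^2\bigl(2\innprod{U}{H}^2+\|U\|_F^2\|H\|_F^2\bigr)+b_1(\|X\|_F\|V\|_F+\varepsilon_1)\|H\|_F^2$, which I lower-bound by $a_1\|V\|_F^2\|U\|_F^2\|H\|_F^2+b_1(\|X\|_F\|V\|_F+\varepsilon_1)\|H\|_F^2$ after discarding the nonnegative term $2a_1\|V\|_F^2\innprod{U}{H}^2$. Matching the two structurally distinct groups—the $\|U\|_F^2\|V\|_F^2$ group and the $\|X\|_F\|V\|_F$ group—then forces $L_1a_1\ge6$ and $L_1b_1\ge2$, i.e. $L_1\ge\max\{\tfrac6{a_1},\tfrac2{b_1}\}$, matching the first bound in \eqref{eq:l1l2UpperBounds}; the surplus $\varepsilon_1$-term only relaxes the requirement.

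I expect the main obstacle to be the quartic $U$-Hessian: cleanly separating the two contributions $\|HVU^T+UVH^T\|_F^2$ and $2\innprod{UVU^T}{HVH^T}$ and extracting the sharp constant $6=4+2$ through the Frobenius submultiplicativity estimates is the delicate step, whereas the $V$-block and the $X$-linear term are routine.
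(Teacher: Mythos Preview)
Your proposal is correct and follows essentially the same route as the paper: both verify the second-order characterization \eqref{eq:descentEqviC2} by bounding $\innprod{H}{\nabla^2_{UU}f\,[H]}$ and $\innprod{H}{\nabla^2_{VV}f\,[H]}$ above and the corresponding kernel Hessians below, then match coefficients to read off $L_1a_1\ge 6$, $L_1b_1\ge 2$, and $L_2a_2\ge 1$. The only cosmetic difference is that the paper expands $\nabla^2_{UU}f(U,V)[Z]$ into six individual terms (each bounded by $\|U\|^2\|V\|^2\|Z\|_F^2$) plus the $X$-term, whereas you group the quartic contribution as $\|HVU^T+UVH^T\|_F^2+2\innprod{UVU^T}{HVH^T}$ and bound it as $4+2=6$; the resulting constants and the comparison with $\nabla^2_{UU}h_1$ are identical.
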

	
	\begin{proof}
	    Plugging the partial derivative $\nabla_U f(U,V)=-XUV^T-X^TUV+UVU^TUV^T+UV^TU^TUV$ into the definition of directional derivative, we obtain
	    \begin{align*}
	    \begin{array}{ll}
	        \nabla_{UU}^2 f(U,V)Z&= -2XZV^T+UVU^TZV+UVZ^TUV+ZVU^TUV^T\\
	        &~~~+UV^TU^TZV+UV^TZ^TUV+ZV^TU^TUV,
	    \end{array}
	    \end{align*}
	    which consequently leads to
	    \begin{align*}
	     \begin{array}{ll}
	        \innprod{Z}{\nabla_{UU}^2 f(U,V)Z}&= -2\innprod{Z}{XZV^T}+\innprod{Z}{UVU^TZV}+\innprod{Z}{UVZ^TUV}+\innprod{Z}{ZVU^TUV^T}\\
	        &~~~+\innprod{Z}{UV^TU^TZV}+\innprod{Z}{UV^TZ^TUV}+\innprod{Z}{ZV^TU^TUV}\\
	        &\leq \left(2\|X\|~\|V\|+6\|U\|^2\|V\|^2\right)\|Z\|_F^2.
	        \end{array}
	    \end{align*}
	    On the other hand, from $\nabla_U h_1(U,V)=\left(a_1 \|U\|_F^2 \|V\|_F^2+b_1 \left(\|X\|_F~\|V\|_F+\varepsilon_1\right) \right)U$, we have
	    \begin{align*}
	        \nabla_{UU}^2 h_1(U,V)Z= \left(2a_1\|V\|^2\innprod{U}{Z} \right)U+\left(a_1 \|V\|_F^2\|U\|_F^2+b_1\left(\|X\|_F~\|V\|_F+\varepsilon_1\right) \right)Z,
	    \end{align*}
	    implying that
	\begin{align*}
	     \begin{array}{ll}
	        \innprod{Z}{\nabla_{UU}^2 h_1(U,V)Z}&\geq
	        \left(a_1 \|V\|_F^2\|U\|_F^2+b_1\left(\|X\|_F~\|V\|_F+\varepsilon_1\right) \right)\|Z\|_F^2\\
	        &\geq
	        \left(a_1 \|V\|^2\|U\|^2+b_1\left(\|X\|~\|V\|+\varepsilon_1\right) \right)\|Z\|_F^2.
	        \end{array}
	    \end{align*}
	    Therefore, the inequality 
	    \begin{align*}
	        \innprod{Z}{(L_1\nabla_{UU}^2 h_1(U,V)&-\nabla_{UU}^2 f(U,V))Z}\\
	        &\geq \left((L_1 a_1-6)\|V\|^2\|U\|^2+(L_1 b_1-2)\|X\|~ \|V\|+\varepsilon_1 L_1 \right)\|Z\|_F^2
	        \geq 0
	    \end{align*}
	    holds if $L_1 a_1-6\geq 0$ and $L_1 b_1-2\geq 0$, as claimed.
	    
	    It follows from $\nabla_V f(U,V)=U^TXU+U^TUVU^TU$ that
	    \begin{align*}
	        \nabla_{VV}^2 f(U,V)Z= \lim \frac{\nabla_U f(U+tZ,V)-\nabla_U f(U,V)}{t}=U^TUZU^TU,
	    \end{align*}
	    leading to the inequality
	    $\innprod{Z}{\nabla_{VV}^2 f(U,V)Z}= \innprod{Z}{U^TUZU^TU}\leq \|U\|^4\|Z\|_F^2$. Now, using 
	    $\nabla_V h_2(U,V)=a_2\left(\|U\|^4+\varepsilon_2\right) V$, we get $\innprod{Z}{\nabla_{VV}^2 h_2(U,V)Z}=a_2 \left(\|U\|^4+\varepsilon_2\right)\|Z\|_F^2$, i.e.,
	    \begin{align*}
	        \innprod{Z}{(L_2\nabla_{VV}^2 h_2(U,V)-\nabla_{VV}^2 f(U,V))Z}=\left((L_2a_2-1)\|U\|^4+\varepsilon_2 L_2\right)\|Z\|_F^2\geq 0
	    \end{align*}
	    if $L_2 a_2-1\geq 0$, giving our desired results.
	\end{proof}
	
	\vspace{-4mm}	
    \section{Block inertial Bregman proximal algorithm}\label{sec:unipa}
	This section discusses our algorithm, starting from the prox-boundedness extension \cite{rockafellar2011variational}.
	
	\begin{defin}[block prox-boundedness] 
	\label{def:bregman} 
	    A function $\func{g}{\R^{n}}{\Rinf}$ is \DEF{block prox-bounded} if for each $i\in \set{1,\ldots,N}$ there exists $\gamma_i>0$ and $\bm x\in\R^n$ such that 
	    \[
	        \inf_{z\in\R^{n_i}} \set{g(\bm x+U_i(z-x_i)) +\tfrac{1}{\gamma_i}\D(\bm x+U_i(z-x_i),\bm x)}>-\infty.
	    \]
	The supremum of the set of all such $\gamma_i$ is the threshold $\gamma_{i,g}^h$ of the block prox-boundedness, 
	\begin{equation}\label{eq:dProxBound}
	    \gamma_{i,g}^{h_i}:=\sup_{\gamma_i>0} \set{\gamma_i: \exists \bm x\in\R^n, 
	    \inf_{z\in\R^{n_i}} \set{g(\bm x+U_i(z-x_i)) +\tfrac{1}{\gamma_i}\D(\bm x+U_i(z-x_i),\bm x)}>-\infty}.
	\end{equation}
	\end{defin}
	
	\begin{prop}[characteristics of block prox-boundedness] 
	\label{pro:proxBoundedness}%
		For \(\func{h_i}{\R^n}{\Rinf}\) and proper and lsc functions $\func{g_i}{\R^{n_i}}{\Rinf}$ ($i=1,\ldots,N$), the following statements are equivalent:
		\begin{enumerateq}
		\item\label{pro:proxBoundedness1}
			$g=\sum_{i=1}^N g_i$ is block prox-bounded;
		\item\label{pro:proxBoundedness2}
			for all $i=1,\ldots,N$, $g_i+r_i h_i(\bm x+U_i(z-x_i))$ is bounded below on $\R^{n_i}$ for some $r_i\in\R$;
		\item\label{pro:proxBoundedness3}
			for all $i=1,\ldots,N$, $\liminf_{\|z\|\to \infty}\nicefrac{g_i(z)}{h_i(\bm x+U_i(z-x_i))}>-\infty$.
		\end{enumerateq}
	\end{prop}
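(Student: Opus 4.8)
The plan is to reduce the (jointly defined) block prox-boundedness to a per-block statement by exploiting the separability of $g$, and then to pass between the three formulations using the $1$-coercivity built into the notion of an $i$-th block kernel, with the lower semicontinuity of $g_i$ controlling bounded regions. Fix $i$ and a reference point $\bm x$ with $\bm x\in\interior\dom h_i$ and $x_j\in\dom g_j$ for $j\neq i$. Since $g=\sum_j g_j$ is block separable, $g(\bm x+U_i(z-x_i))=g_i(z)+\sum_{j\neq i}g_j(x_j)$ differs from $g_i(z)$ by a finite constant. Expanding the Bregman term in \cref{def:bregman} gives
\[
\D(\bm x+U_i(z-x_i),\bm x)=h_i(\bm x+U_i(z-x_i))-h_i(\bm x)-\innprod{\nabla_i h_i(\bm x)}{z-x_i},
\]
so block prox-boundedness is equivalent to the existence of $\gamma_i>0$ for which $z\mapsto g_i(z)+\tfrac1{\gamma_i}h_i(\bm x+U_i(z-x_i))-\tfrac1{\gamma_i}\innprod{\nabla_i h_i(\bm x)}{z}$ is bounded below, the remaining terms being $z$-independent and finite. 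This isolates the only discrepancy with statement \ref{pro:proxBoundedness2}: the linear term $-\tfrac1{\gamma_i}\innprod{\nabla_i h_i(\bm x)}{z}$.

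For the equivalence of \ref{pro:proxBoundedness1} and \ref{pro:proxBoundedness2}, the key fact is that, as $h_i$ is an $i$-th block kernel, $z\mapsto h_i(\bm x+U_i(z-x_i))$ is convex and $1$-coercive; hence any positive multiple of it is bounded below and dominates every linear function. Starting from the bounded-below property above with coefficient $\tfrac1{\gamma_i}$, I would enlarge the coefficient to any $r_i>\tfrac1{\gamma_i}$ and use the surplus $(r_i-\tfrac1{\gamma_i})h_i(\bm x+U_i(z-x_i))$ as a $1$-coercive reservoir that absorbs the linear term, yielding \ref{pro:proxBoundedness2}. Conversely, from \ref{pro:proxBoundedness2} with a given $r_i$, picking $\gamma_i>0$ with $\tfrac1{\gamma_i}>r_i$ makes $(\tfrac1{\gamma_i}-r_i)h_i(\bm x+U_i(z-x_i))$ a positive multiple of a $1$-coercive function, again absorbing the linear correction, so the displayed function is bounded below and block prox-boundedness holds.

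For the equivalence of \ref{pro:proxBoundedness2} and \ref{pro:proxBoundedness3}, I would use that $1$-coercivity forces $h_i(\bm x+U_i(z-x_i))\to+\infty$ as $\|z\|\to\infty$, so this quantity is positive for all large $\|z\|$. Given \ref{pro:proxBoundedness2}, the bound $g_i(z)+r_i h_i(\bm x+U_i(z-x_i))\geq M$ implies $\nicefrac{g_i(z)}{h_i(\bm x+U_i(z-x_i))}\geq \nicefrac{M}{h_i(\bm x+U_i(z-x_i))}-r_i\to -r_i$, so the liminf in \ref{pro:proxBoundedness3} is at least $-r_i>-\infty$. Conversely, if that liminf equals $\ell>-\infty$, then for some radius $R$ and all $\|z\|>R$ one has $g_i(z)\geq(\ell-1)h_i(\bm x+U_i(z-x_i))$, i.e.\ $g_i(z)+r_ih_i(\bm x+U_i(z-x_i))\geq0$ with $r_i:=\max\{0,\,1-\ell\}\geq0$; on the compact ball $\{z:\|z\|\leq R\}$ the sum is lower semicontinuous, hence bounded below, giving \ref{pro:proxBoundedness2}.

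The main obstacle I expect is the bookkeeping of the linear correction term generated by the Bregman distance, which is absent from \ref{pro:proxBoundedness2} and \ref{pro:proxBoundedness3}: the entire argument hinges on the observation that the $1$-coercivity required of an $i$-th block kernel lets a slightly adjusted positive multiple of $h_i$ dominate this term uniformly. A secondary point needing care is that the passage from $g$ to $g_i$ uses block separability together with the choice of a reference point in $\dom g\cap\interior\dom h_i$, so that the discarded $z$-independent terms remain finite and the $\C^1$-smoothness of $h_i$ guarantees $\nabla_i h_i(\bm x)$ exists.
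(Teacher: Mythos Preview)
Your argument is correct and is precisely the natural adaptation one would write out: reduce to a single block via the separability of $g$, then use the $1$-coercivity of the $i$-th block kernel to absorb the linear term coming from the Bregman expansion when passing between \ref{pro:proxBoundedness1} and \ref{pro:proxBoundedness2}, and handle \ref{pro:proxBoundedness2}$\Leftrightarrow$\ref{pro:proxBoundedness3} by the standard coercivity/lsc-on-compacts splitting. The paper itself does not spell any of this out---it simply records that the result is a straightforward adaptation of \cite[Proposition~2.7]{ahookhosh2019multi}---so your write-up is effectively what that adaptation looks like in detail.
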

	
	\begin{proof}
	    The proof is a straightforward adaptation of \cite[Proposition 2.7]{ahookhosh2019multi}.
	\end{proof}
	
	For a given points $\bm x^k, \bm x^{k-1}\in\R^n$ and $\alpha_i^k\geq 0$, let us define the function $\func{\M}{\dom h_i\times\interior\dom h_i\times\interior\dom h_i}{\Rinf}$ given by
	\begin{equation}\label{eq:modelM}
	   \M(\bm x,\bm x^k,\bm x^{k-1}):= \innprod{\nabla f(\bm x^k)-\tfrac{\alpha_i^k}{\gamma_i^k}(\bm x^k-\bm x^{k-1})}{\bm x-\bm x^k}+\tfrac{1}{\gamma_i^k}\D(\bm x,\bm x^k)+\sum_{i=1}^N g_i(x_i)
	\end{equation}
	and the \DEF{block inertial Bregman proximal} mapping $\ffunc{\T}{\interior\dom h_i\times \interior\dom h_i}{\R^{n_i}}$ as
	\begin{equation}\label{eq:tx}
	    \begin{split}
	        &\T(\bm x^k,\bm x^{k-1}):= \argmin_{z\in\R^{n_i}}~ \M(\bm x^k+U_i(z-x_i^k),\bm x^k,\bm x^{k-1})\\
	        &~~~= \argmin_{z\in\R^{n_i}}~ \innprod{\nabla_i f(\bm x^k)-\tfrac{\alpha_i^k}{\gamma_i^k}(\bm x_i^k-\bm x_i^{k-1})}{z-\bm x_i^k}+\tfrac{1}{\gamma_i^k}\D(\bm x^k+U_i(z-x_i^k),\bm x^k)+ g_i(z),
	    \end{split}
	\end{equation}
	which is set-valued by nonconvexity of $g_i$ ($i=1,\ldots,N$), and it reduces to the inertial Bregman forward-backward mapping for $N=1$; cf. \cite{bot2016inertial}. 
	For a given sequence $\seq{\bm x^k}$, we introduce the following notation 
	\begin{align}\label{eq:xki}
	    \bm x^{k,i}:=(x_1^{k+1},\ldots,x_i^{k+1},x_{i+1}^k,\ldots,x_N^k),
	\end{align}
	i.e., $\bm x^{k,0}=\bm x^k$ and $\bm x^{k,N}=\bm x^{k+1}$.
	Using this notation and the mapping \eqref{eq:tx}, we next introduce the \DEF{block inertial Bregman proximal  algorithm} (\refBIBPA[]); see Algorithm~\ref{alg:bibpa}. 
	
	\begin{algorithm*}[ht] 
	\algcaption{({\bf\textsc{BIBPA}}) Block Inertial Bregman Proximal Algorithm}%
	\begin{algorithmic}[1]
	\Require{%
		
		\(\bm x^0\in\interior\dom h_1\),\ $I_n=(U_1,\ldots,U_N)\in\R^{n\times n}$ with $U_i\in\R^{n\times n_i}$ and the identity matrix $I_n$, $k=0$.%
	}%
	\While{some stopping criterion is not met}
	\State\label{state:bpalmm0Xk1}%
	    $\bm x^{k,0}=\bm x^k$;
	    \For{\texttt{$i=1,\ldots,N$}} 
	     choose $\gamma_i^k$ and $\alpha_i^k$ as \cref{cor:descentProp} and compute
	        \begin{align}\label{eq:xik1}
	            x_i^{k,i}\in \T(\bm x^{k,i-1},\bm x^{k-1}),\quad \bm x^{k,i}=\bm x^{k,i-1}+U_i(x_i^{k,i}-x_i^{k,i-1});
	        \end{align}
	    \EndFor
	\State\label{state:bpalmk1}%
		 $\bm x^{k+1}=\bm x^{k,N}$,~$k= k+1$;%
	\EndWhile
	\Ensure{%
		A vector $\bm x^k$.%
	}%
	\end{algorithmic}
	\label{alg:bibpa}
	\end{algorithm*}%
	
	In order to verify the well-definedness of the iterations generated by \refBIBPA[], we next investigate some important properties of the mapping $\T$.
	
	\begin{ass}\label{ass:T}
		For all $z\in \T(\bm x,\bm y)$ and $\gamma_i^k \in (0,\nicefrac{1}{L_i})$, $\bm x+U_i(z-x_i)\in C$ and $i=1,\ldots,N$.
	\end{ass}
	
	\begin{prop}[properties of the mapping $\T$] \label{pro:proxPro} 
	    Under \Cref{ass:basic:fgh} and \Cref{ass:T}, $\gamma_i^k \in (0,\gamma_{i,g}^{h_i})$ for $i\in [N]$, and $\bm x^k, \bm x^{k-1}\in\interior\dom h_i$, the following statements are true:
	    \begin{enumerate}
	    \item \label{pro:proxPro3} 
	    $\T(\bm x^k,\bm x^{k-1})$ is nonempty, compact, and outer semicontinuous; 
	    \item \label{pro:proxPro1} 
	    $\dom \T=\interior\dom h_i\times \interior\dom h_i$; 
	    \item \label{pro:proxPro2}
	    If $x_i^{k,i}\in \T(\bm x^{k,i-1},\bm x^{k-1})$ for $\gamma_i^k \in (0,\nicefrac{1}{L_i})$, then $\bm x^{k,i}\in \interior\dom h_i$.
	    \end{enumerate}\let\qedsymbol\relax
	\end{prop}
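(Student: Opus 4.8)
The plan is to reduce everything to a single block: freezing the coordinates outside block $i$ at $\bm x^{k}$ (resp.\ $\bm x^{k,i-1}$), the set $\T(\bm x^k,\bm x^{k-1})$ is exactly the set of minimizers of the single-block function
\begin{equation*}
\psi_i(z):=g_i(z)+\tfrac{1}{\gamma_i^k}\D(\bm x^k+U_i(z-x_i^k),\bm x^k)+\innprod{\nabla_i f(\bm x^k)-\tfrac{\alpha_i^k}{\gamma_i^k}(x_i^k-x_i^{k-1})}{z-x_i^k},
\end{equation*}
whose last term is linear in $z$ with a constant coefficient. I would prove \cref{pro:proxPro3} by checking the hypotheses of the Weierstrass-type theorem \cite[Theorem 1.9]{rockafellar2011variational} for $\psi_i$, deduce \cref{pro:proxPro1} from it together with the domain of definition of the data, and obtain \cref{pro:proxPro2} from \Cref{ass:T}.

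For \cref{pro:proxPro3}, I would first record that $\psi_i$ is proper (not identically $+\infty$ under \Cref{ass:basic:fgh}, since $\bm x^k\in\interior\dom h_i$ and $\dom g_i\neq\emptyset$) and lsc (because $g_i$ and $h_i$ are lsc while the remaining terms are continuous). The crux is \emph{level-boundedness}. Choosing $\bar\gamma$ with $\gamma_i^k<\bar\gamma<\gamma_{i,g}^{h_i}$, block prox-boundedness (\Cref{pro:proxBoundedness}) makes $z\mapsto g_i(z)+\tfrac{1}{\bar\gamma}\D(\bm x^k+U_i(z-x_i^k),\bm x^k)$ bounded below by some $m\in\R$, so
\begin{equation*}
\psi_i(z)\ge m+\Bigl(\tfrac{1}{\gamma_i^k}-\tfrac{1}{\bar\gamma}\Bigr)\D(\bm x^k+U_i(z-x_i^k),\bm x^k)+\innprod{\nabla_i f(\bm x^k)-\tfrac{\alpha_i^k}{\gamma_i^k}(x_i^k-x_i^{k-1})}{z-x_i^k}.
\end{equation*}
Since $\tfrac{1}{\gamma_i^k}-\tfrac{1}{\bar\gamma}>0$ and $h_i(\bm x^k+U_i(\,\cdot\,-x_i^k))$ is $1$-coercive, the Bregman term grows super-linearly in $\|z\|$ by \eqref{eq:bregman} and therefore dominates the linear inertial/gradient term, giving $\psi_i(z)\to\infty$ as $\|z\|\to\infty$. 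Hence $\psi_i$ is proper, lsc, and level-bounded, so $\T(\bm x^k,\bm x^{k-1})=\argmin\psi_i$ is nonempty and compact \cite[Theorem 1.9]{rockafellar2011variational}. Outer semicontinuity I would get by showing the graph is closed: the bound above is locally uniform in $(\bm x^k,\bm x^{k-1})$ and $\psi_i$ is jointly lsc in $(z,\bm x^k,\bm x^{k-1})$, so if $(\bm x^k_j,\bm x^{k-1}_j)\to(\bm x^k,\bm x^{k-1})$ and $\T\ni z_j\to z$, then passing to the limit in $\psi_i(z_j)\le\psi_i(w)$ (for arbitrary $w$) yields $z\in\argmin\psi_i$.

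Then \cref{pro:proxPro1} is immediate: \cref{pro:proxPro3} gives $\interior\dom h_i\times\interior\dom h_i\subseteq\dom\T$, and the reverse inclusion holds because evaluating $\M$ requires $\nabla_i f(\bm x^k)$ and $\nabla_i h_i(\bm x^k)$, which exist only for $\bm x^k\in\interior\dom h_i$ (by \Cref{ass:basic:f} and $i$-th block essential smoothness), while the inertial term is defined for $\bm x^{k-1}\in\interior\dom h_i$. Finally, for \cref{pro:proxPro2}, take $\gamma_i^k\in(0,\nicefrac{1}{L_i})$ and $x_i^{k,i}\in\T(\bm x^{k,i-1},\bm x^{k-1})$; \Cref{ass:T} then gives $\bm x^{k,i}=\bm x^{k,i-1}+U_i(x_i^{k,i}-x_i^{k,i-1})\in C$. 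Since $C$ is open and $\overline C\subseteq\overline{\dom h_1}=\overline{\dom h_i}$, with $\interior\dom h_i$ a nonempty convex set satisfying $\overline{\dom h_i}=\overline{\interior\dom h_i}$, I would conclude $C\subseteq\interior\dom h_i$, hence $\bm x^{k,i}\in\interior\dom h_i$.

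The main obstacle is the outer semicontinuity in \cref{pro:proxPro3}: one must verify that the level-boundedness of $\psi_i$ holds \emph{locally uniformly} as the base points vary, so that the parametric-minimization closed-graph argument is legitimate (rather than merely pointwise coercivity). A secondary subtlety is the purely topological step $C\subseteq\interior\dom h_i$ used in \cref{pro:proxPro2}, which hinges on the convexity of the common interior $\interior\dom h_1=\dots=\interior\dom h_N$ guaranteed by \Cref{ass:basic:h}.
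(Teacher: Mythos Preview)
Your proposal is correct and self-contained. The paper itself does not actually prove this proposition: it states only that ``the proof follows from \cite[Proposition~2.10]{ahookhosh2019multi} and \Cref{ass:T},'' i.e., it cites the analogous single-block/non-inertial result from the companion paper and absorbs the inertial linear term into that framework. What you have written is essentially a direct verification of that cited result in the present block-inertial setting, so the two approaches coincide in spirit---yours simply unpacks the argument (Weierstrass via prox-boundedness and $1$-coercivity of the block kernel; closed-graph for outer semicontinuity; \Cref{ass:T} for \cref{pro:proxPro2}) rather than invoking it by reference.

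The two subtleties you flag are real and well identified. The locally uniform level-boundedness needed for outer semicontinuity is exactly the content of the ``level-bounded locally uniformly in the parameter'' hypothesis in the parametric minimization theorem \cite[Theorem~1.17]{rockafellar2011variational}; your lower bound makes this evident since the super-linear Bregman term depends continuously on $\bm x^k$ on compact subsets of $\interior\dom h_i$. The topological step $C\subseteq\interior\dom h_i$ does rely on $\dom h_i$ (or at least $\interior\dom h_i$) being convex so that $\overline{\interior\dom h_i}=\overline{\dom h_i}$; the paper does not state this explicitly at this point (it is implicit in the Legendre-type setup and in the later standing assumption $\overline C\subseteq\interior\dom h_1$), so your caveat is appropriate.
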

	
	\begin{proof}
	    The proof follows from \cite[Proposition 2.10]{ahookhosh2019multi} and \Cref{ass:T}.
	\end{proof}
	
	In the subsequent lemma, we show that the cost function $\Phi$ satisfies some necessary inequality that will be needed in the next result.
	
	\begin{lem}[cyclic inequality of the cost]
	\label{pro:proxAltIneq}
	    Let \Cref{ass:basic:fgh} and \Cref{ass:T} hold, and let $\seq{\bm x^k}$ be generated by \refBIBPA[]. If $h_i$ ($i\in [N]$) is $\sigma_i$-block strongly convex, then we have
	            \begin{equation}\label{eq:cyclicIneq}
	                \Phi(\bm x^{k+1})-\Phi(\bm x^{k})\leq \sum_{i=1}^N \left(\left(\tfrac{|\alpha_i^k|}{\sigma_i\gamma_i^k}-\tfrac{1-\gamma_i^k Li}{\gamma_i^k}\right)\D(\bm x^{k,i},\bm x^{k,i-1})
	                +\tfrac{|\alpha_i^k|}{\sigma_i\gamma_i^k}\D(\bm x^{k-1,i},\bm x^{k-1,i-1})\right).
	            \end{equation}
	\end{lem}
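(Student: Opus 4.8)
The plan is to exploit the block structure through a telescoping decomposition of the cost change along one inner cycle. Writing $\Phi(\bm x^{k+1})-\Phi(\bm x^k)=\sum_{i=1}^N\bigl(\Phi(\bm x^{k,i})-\Phi(\bm x^{k,i-1})\bigr)$ and using that $\bm x^{k,i}$ and $\bm x^{k,i-1}$ differ only in the $i$-th block, via $\bm x^{k,i}=\bm x^{k,i-1}+U_i(x_i^{k,i}-x_i^{k,i-1})$, it suffices to bound each single-block increment and then sum. For fixed $i$, only $f$ and $g_i$ change, so $\Phi(\bm x^{k,i})-\Phi(\bm x^{k,i-1})=\bigl(f(\bm x^{k,i})-f(\bm x^{k,i-1})\bigr)+\bigl(g_i(x_i^{k,i})-g_i(x_i^{k,i-1})\bigr)$, which splits the work cleanly.

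First I would control the $f$-part using block relative smoothness: since $f$ is $(L_1,\dots,L_N)$-smooth relative to $(h_1,\dots,h_N)$, the descent inequality \eqref{eq:upperIneq} applied to block $i$ at the base point $\bm x^{k,i-1}$ gives $f(\bm x^{k,i})-f(\bm x^{k,i-1})\leq\innprod{\nabla_i f(\bm x^{k,i-1})}{x_i^{k,i}-x_i^{k,i-1}}+L_i\,\D(\bm x^{k,i},\bm x^{k,i-1})$. Next I would control the $g_i$-part using optimality of the proximal step: since $x_i^{k,i}\in\T(\bm x^{k,i-1},\bm x^{k-1})$ minimizes the model in \eqref{eq:tx}, comparing its value with that at the candidate $z=x_i^{k,i-1}$ — where both the linear and the Bregman terms vanish — yields $g_i(x_i^{k,i})-g_i(x_i^{k,i-1})\leq -\innprod{\nabla_i f(\bm x^{k,i-1})-\tfrac{\alpha_i^k}{\gamma_i^k}(x_i^k-x_i^{k-1})}{x_i^{k,i}-x_i^{k,i-1}}-\tfrac{1}{\gamma_i^k}\D(\bm x^{k,i},\bm x^{k,i-1})$. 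Adding the two bounds, the $\innprod{\nabla_i f(\bm x^{k,i-1})}{\cdot}$ terms cancel and one is left with $\Phi(\bm x^{k,i})-\Phi(\bm x^{k,i-1})\leq-\tfrac{1-\gamma_i^k L_i}{\gamma_i^k}\D(\bm x^{k,i},\bm x^{k,i-1})+\tfrac{\alpha_i^k}{\gamma_i^k}\innprod{x_i^k-x_i^{k-1}}{x_i^{k,i}-x_i^{k,i-1}}$.

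The remaining task is to absorb the inertial cross term into Bregman distances, and this is the main bookkeeping subtlety. The key observation is to read the inertial increment through the cyclic index \eqref{eq:xki}: the $i$-th block of $\bm x^{k-1,i}$ is $x_i^k$ and that of $\bm x^{k-1,i-1}$ is $x_i^{k-1}$, so $x_i^k-x_i^{k-1}=x_i^{k-1,i}-x_i^{k-1,i-1}$ is exactly the previous-cycle increment of block $i$, naturally measured by $\D(\bm x^{k-1,i},\bm x^{k-1,i-1})$. I would then apply Cauchy–Schwarz followed by Young's inequality $ab\leq\tfrac12(a^2+b^2)$, and convert both squared norms into Bregman distances using the $\sigma_i$-block strong convexity of $h_i$, namely $\tfrac{\sigma_i}{2}\|w-x_i\|^2\leq\D(\bm x+U_i(w-x_i),\bm x)$. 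This bounds $\tfrac{\alpha_i^k}{\gamma_i^k}\innprod{x_i^k-x_i^{k-1}}{x_i^{k,i}-x_i^{k,i-1}}$ by $\tfrac{|\alpha_i^k|}{\sigma_i\gamma_i^k}\bigl(\D(\bm x^{k,i},\bm x^{k,i-1})+\D(\bm x^{k-1,i},\bm x^{k-1,i-1})\bigr)$. Substituting into the per-block estimate and summing over $i$ produces exactly \eqref{eq:cyclicIneq}. The only further point demanding care is that the base points lie in $\interior\dom h_i$, so that \eqref{eq:upperIneq} and every Bregman term are well defined; this is guaranteed by \Cref{pro:proxPro}\ref{pro:proxPro2} under the step-size restriction $\gamma_i^k\in(0,\nicefrac{1}{L_i})$.
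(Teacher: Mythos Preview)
Your proposal is correct and follows essentially the same approach as the paper's proof: both combine the block relative-smoothness descent inequality with the optimality of the inner proximal step, then handle the inertial cross term via Cauchy--Schwarz/Young and the $\sigma_i$-block strong convexity of $h_i$, and finally telescope over $i$. Your explicit identification $x_i^k-x_i^{k-1}=x_i^{k-1,i}-x_i^{k-1,i-1}$ is exactly what the paper uses implicitly when it replaces $\tfrac{|\alpha_i^k|}{2\gamma_i^k}\|x_i^k-x_i^{k-1}\|^2$ by $\tfrac{|\alpha_i^k|}{\sigma_i\gamma_i^k}\D(\bm x^{k-1,i},\bm x^{k-1,i-1})$.
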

	
	\begin{proof}
	    For $i\in\set{1,\ldots,N}$ and $x_i^{k,i}\in \T(\bm x^{k,i-1},\bm x^{k-1})$, it holds that
	    \[
	        \innprod{\nabla_i f(\bm x^{k,i-1})-\tfrac{\alpha_i^k}{\gamma_i^k}(x_i^k- x_i^{k-1})}{x_i^{k,i}-x_i^k}
	        +\tfrac{1}{\gamma_i^k}\D(\bm x^{k,i},\bm x^{k,i-1})+ \sum_{j=1}^N g_j(\bm x_j^{k,i})\leq \sum_{j=1}^N  g_j(\bm x_j^{k,i-1}).
	    \]
	    Together with \Cref{ass:basic:f} and  \Cref{fac:relSmoothEqvi2}, this implies
	    \begin{align*}
	    \begin{array}{ll}
	    &    f(\bm x^{k,i}) \leq f(\bm x^{k,i-1})+\innprod{\nabla_i f(\bm x^{k,i-1})}{x_i^{k,i}-x_i^k}+L_i \D(\bm x^{k,i},\bm x^{k,i-1})\\
	        &\leq f(\bm x^{k,i-1})+\sum_{j=1}^N  g_j(\bm x_j^{k,i-1})-\sum_{j=1}^N g_j(\bm x_j^{k,i}) -\tfrac{1-\gamma_i^k Li}{\gamma_i^k}\D(\bm x^{k,i},\bm x^{k,i-1})+\tfrac{\alpha_i^k}{\gamma_i^k}\innprod{x_i^k- x_i^{k-1}}{x_i^{k,i}-x_i^k}\\
	        &\leq f(\bm x^{k,i-1})+\sum_{j=1}^N  g_j(\bm x_j^{k,i-1})-\sum_{j=1}^N g_j(\bm x_j^{k,i})-\tfrac{1-\gamma_i^k Li}{\gamma_i^k}\D(\bm x^{k,i},\bm x^{k,i-1})\\
	        &~~~+\tfrac{|\alpha_i^k|}{2\gamma_i^k}\left(\|x_i^k- x_i^{k-1}\|^2+\|x_i^{k,i}-x_i^k\|^2\right)\\
	        &\leq f(\bm x^{k,i-1})+\sum_{j=1}^N  g_j(\bm x_j^{k,i-1})-\sum_{j=1}^N g_j(\bm x_j^{k,i})+\left(\tfrac{|\alpha_i^k|}{\sigma_i\gamma_i^k}-\tfrac{1-\gamma_i^k Li}{\gamma_i^k}\right)\D(\bm x^{k,i},\bm x^{k,i-1})\\
	        &~~~+\tfrac{|\alpha_i^k|}{\sigma_i\gamma_i^k}\D(\bm x^{k-1,i},\bm x^{k-1,i-1}),
	        \end{array}
	    \end{align*}
	    which yields 
	    \begin{equation}\label{eq:fiOverx}
	        \Phi(\bm x^{k,i})\leq \Phi(\bm x^{k,i-1})+\left(\tfrac{|\alpha_i^k|}{\sigma_i\gamma_i^k}-\tfrac{1-\gamma_i^k Li}{\gamma_i^k}\right)\D(\bm x^{k,i},\bm x^{k,i-1})
	        +\tfrac{|\alpha_i^k|}{\sigma_i\gamma_i^k}\D(\bm x^{k-1,i},\bm x^{k-1,i-1}).
	    \end{equation}
	    Now, let us sum up both sides of \eqref{eq:fiOverx} for $i=1,\ldots,N$, i.e., 
	    \begin{align*}
	    \begin{array}{ll}
	        \Phi(\bm x^{k+1})-\Phi(\bm x^{k})&=\sum_{i=1}^N \left(\Phi(\bm x^{k,i})-\Phi(\bm x^{k,i-1})\right)\\
	        &\leq \sum_{i=1}^N \left(\left(\tfrac{|\alpha_i^k|}{\sigma_i\gamma_i^k}-\tfrac{1-\gamma_i^k Li}{\gamma_i^k}\right)\D(\bm x^{k,i},\bm x^{k,i-1})
	        +\tfrac{|\alpha_i^k|}{\sigma_i\gamma_i^k}\D(\bm x^{k-1,i},\bm x^{k-1,i-1})\right).
	        \end{array}
	    \end{align*}
	\end{proof}
	
	We notice that \Cref{pro:proxAltIneq} does not guarantee the monotonicity of the sequence $\seq{\Phi(\bm x^k)}$. For $\bm x, \bm y\in\R^n$ and $\delta_i\geq 0$, we define the \DEF{Lyapunov function} $\func{\mathcal{L}}{\R^n\times\R^n}{\Rinf}$,
	\begin{equation}\label{eq:LyapunovFunc}
	    \mathcal{L}(\bm x,\bm y):= \Phi(\bm x)+\sum_{i=1}^N \delta_i \D((x_1,\ldots,x_i,y_{i+1},\ldots,y_N),(x_1,\ldots,x_{i-1},y_{i},\ldots,y_N)),
	\end{equation}
	Note that 
	$    \mathcal{L}(\bm x^{k+1},\bm x^{k}):= \Phi(\bm x^{k+1})+\sum_{i=1}^N \delta_i \D(\bm x^{k,i},\bm x^{k,i-1}).$ 
	We denote by $\mathcal{L}^{k+1}$ and $\mathcal{L}^k$ the terms $\mathcal{L}(\bm x^{k+1},\bm x^{k})$ and $\mathcal{L}(\bm x^{k},\bm x^{k-1})$, respectively.
	We next indicate the monotonicity of  $\seq{\mathcal{L}^k}$. 

	\begin{prop}[descent property of the Lyapunov function]
	\label{cor:descentProp}
	    Let \Cref{ass:basic:fgh} and \Cref{ass:T} hold, let $\seq{\bm x^k}$ be generated by \refBIBPA[], and let $h_i$ ($i=1,\ldots,N$) be $\sigma_i$-block strongly convex. If  $\lim_{k\to\infty} \alpha_i^k = \alpha_i$ and $0<\gamma_i \leq \tfrac{\sigma_i-2|\alpha_i|}{\sigma_i L_i}$ and
	    \begin{align}\label{eq:assumAlphai}
	        |\alpha_i^k| < \tfrac{\sigma_i}{2},~~  0<\gamma_i\leq\gamma_i^k \leq \tfrac{\sigma_i-2|\alpha_i^k|}{\sigma_i L_i},~~ \tfrac{|\alpha_i^k|}{\sigma_i\gamma_i^k}\leq \delta_i\leq \tfrac{1-\gamma_i^k Li}{\gamma_i^k}-\tfrac{|\alpha_i^k|}{\sigma_i\gamma_i^k}\quad i=1,\ldots,N,
	    \end{align}
	    then, setting $a_i:=\tfrac{1-\gamma_i^k Li}{\gamma_i^k}-\tfrac{|\alpha_i^k|}{\sigma_i\gamma_i^k}-\delta_i$ and $b_i:=\delta_i-\tfrac{|\alpha_i^k|}{\sigma_i\gamma_i^k}$ for $i=1,\ldots,N$, we get 
	     \begin{equation}\label{eq:LyapunovIneq}
	            \mathcal{L}^{k+1}-\mathcal{L}^k \leq -\sum_{i=1}^N \left(a_i\D(\bm x^{k,i},\bm x^{k,i-1})
	        +b_i\D(\bm x^{k-1,i},\bm x^{k-1,i-1})\right),
	        \end{equation}
	        i.e., the sequence $\seq{\mathcal{L}^k}$ is non-increasing and consequently $\lim_{k\to\infty} \D(\bm x^{k,i},\bm x^{k,i-1})=0$, i.e., $\lim_{k\to\infty} \|\bm x^{k,i}-\bm x^{k,i-1}\|=0$, for all $i=1,\ldots,N$.
	\end{prop}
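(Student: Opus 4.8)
The plan is to compute the increment $\mathcal{L}^{k+1}-\mathcal{L}^k$ of the Lyapunov function and to bound it using the cyclic inequality~\eqref{eq:cyclicIneq} of \Cref{pro:proxAltIneq}. First I would expand the difference via the identities $\mathcal{L}^{k+1}=\Phi(\bm x^{k+1})+\sum_{i=1}^N\delta_i\D(\bm x^{k,i},\bm x^{k,i-1})$ and $\mathcal{L}^{k}=\Phi(\bm x^{k})+\sum_{i=1}^N\delta_i\D(\bm x^{k-1,i},\bm x^{k-1,i-1})$ to get
\begin{equation*}
\mathcal{L}^{k+1}-\mathcal{L}^k=\bigl(\Phi(\bm x^{k+1})-\Phi(\bm x^k)\bigr)+\sum_{i=1}^N\delta_i\bigl(\D(\bm x^{k,i},\bm x^{k,i-1})-\D(\bm x^{k-1,i},\bm x^{k-1,i-1})\bigr),
\end{equation*}
and then substitute the cyclic bound for $\Phi(\bm x^{k+1})-\Phi(\bm x^k)$.

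Next I would collect the coefficients of the two families of Bregman terms. The coefficient of $\D(\bm x^{k,i},\bm x^{k,i-1})$ equals $\tfrac{|\alpha_i^k|}{\sigma_i\gamma_i^k}-\tfrac{1-\gamma_i^kL_i}{\gamma_i^k}+\delta_i=-a_i$, while the coefficient of $\D(\bm x^{k-1,i},\bm x^{k-1,i-1})$ equals $\tfrac{|\alpha_i^k|}{\sigma_i\gamma_i^k}-\delta_i=-b_i$, which is exactly the claimed inequality~\eqref{eq:LyapunovIneq}. The nonnegativity $a_i\geq 0$ and $b_i\geq 0$ then follows immediately from the sandwich $\tfrac{|\alpha_i^k|}{\sigma_i\gamma_i^k}\leq\delta_i\leq\tfrac{1-\gamma_i^kL_i}{\gamma_i^k}-\tfrac{|\alpha_i^k|}{\sigma_i\gamma_i^k}$ in~\eqref{eq:assumAlphai}, so that $\mathcal{L}^{k+1}\leq\mathcal{L}^k$ and $\seq{\mathcal{L}^k}$ is non-increasing.

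To pass to the limit, I would note that $\seq{\mathcal{L}^k}$ is bounded below: each Bregman term is nonnegative (since $\delta_i\geq 0$ and the kernels are block convex), and $\Phi(\bm x^k)\geq\inf_{\overline{C}}\Phi>-\infty$ by \Cref{ass:basic:argmin} together with the feasibility of the iterates guaranteed by \Cref{pro:proxPro}. A monotone sequence bounded below converges, hence $\mathcal{L}^{k}-\mathcal{L}^{k+1}\to 0$. Summing~\eqref{eq:LyapunovIneq} over $k$ and using the index shift that pairs the $a_i$-term at iteration $k$ with the $b_i$-term at iteration $k+1$ yields a telescoping bound showing that $\sum_k(a_i+b_i)\D(\bm x^{k,i},\bm x^{k,i-1})<\infty$; since $a_i+b_i=\tfrac{1}{\gamma_i^k}\bigl(1-\gamma_i^kL_i-\tfrac{2|\alpha_i^k|}{\sigma_i}\bigr)\geq 0$ by the upper bound $\gamma_i^k\leq\tfrac{\sigma_i-2|\alpha_i^k|}{\sigma_i L_i}$, the general term of this convergent series tends to zero, giving $\D(\bm x^{k,i},\bm x^{k,i-1})\to 0$. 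Finally, $\sigma_i$-block strong convexity of $h_i$ gives $\D(\bm x^{k,i},\bm x^{k,i-1})\geq\tfrac{\sigma_i}{2}\|\bm x^{k,i}-\bm x^{k,i-1}\|^2$, so $\|\bm x^{k,i}-\bm x^{k,i-1}\|\to 0$.

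The main obstacle is the last passage to $\lim_{k}\D=0$ rather than the mere summability of the weighted Bregman terms: one must ensure that the effective coefficient $a_i+b_i$ obtained after the telescoping index shift is bounded away from zero (or at least positive), which is precisely where the strict part of the step-size constraint $\gamma_i^k<\tfrac{\sigma_i-2|\alpha_i^k|}{\sigma_i L_i}$ and the limiting condition $\gamma_i\leq\tfrac{\sigma_i-2|\alpha_i|}{\sigma_i L_i}$ enter; the coefficient matching that produces $-a_i$ and $-b_i$ is otherwise routine algebra.
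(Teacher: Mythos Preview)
Your proposal is correct and mirrors the paper's own proof: expand $\mathcal{L}^{k+1}-\mathcal{L}^k$, plug in the cyclic inequality~\eqref{eq:cyclicIneq}, read off the coefficients $-a_i,-b_i$, verify their sign via~\eqref{eq:assumAlphai}, then telescope the sum over $k$ and use block strong convexity. Your explicit treatment of the lower bound on $\mathcal{L}^k$ and the concern about $a_i+b_i$ being bounded away from zero are points the paper passes over tersely, but the overall route is the same.
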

	
	\begin{proof}
	Using \eqref{eq:cyclicIneq} and applying the Lyapunov function \eqref{eq:LyapunovFunc}, we have
	    \begin{align*}
	    \begin{array}{ll}
	        \mathcal{L}^{k+1}-\mathcal{L}^k &= \Phi(\bm x^{k+1})-\Phi(\bm x^k)+\sum_{i=1}^N \delta_i \D(\bm x^{k,i},\bm x^{k,i-1})-\sum_{i=1}^N \delta_i \D(\bm x^{k-1,i},\bm x^{k-1,i-1})\\
	        &\leq \sum_{i=1}^N \left(\left(\tfrac{|\alpha_i^k|}{\sigma_i\gamma_i^k}-\tfrac{1-\gamma_i^k Li}{\gamma_i^k}+\delta_i\right)\D(\bm x^{k,i},\bm x^{k,i-1})
	        +\left(\tfrac{|\alpha_i^k|}{\sigma_i\gamma_i^k}-\delta_i\right) \D(\bm x^{k-1,i},\bm x^{k-1,i-1})\right),
	        \end{array}
	    \end{align*}
	    as claimed in \eqref{eq:LyapunovIneq}.
	    In order to guarantee the non-increasing property of the sequence $\seq{\mathcal{L}^k}$, the inequalities 
	  $        a_i=\tfrac{1-\gamma_i^k Li}{\gamma_i^k}-\tfrac{|\alpha_i^k|}{\sigma_i\gamma_i^k}-\delta_i\geq 0$, $
	        b_i=\delta_i-\tfrac{|\alpha_i^k|}{\sigma_i\gamma_i^k} \geq 0
	    $
	    should be satisfied, for $i=1,\ldots,N$, i.e.,
	   $
	        \tfrac{|\alpha_i^k|}{\sigma_i\gamma_i^k}\leq \delta_i\leq \tfrac{1-\gamma_i^k Li}{\gamma_i^k}-\tfrac{|\alpha_i^k|}{\sigma_i\gamma_i^k} \leq \tfrac{1-\gamma_i Li}{\gamma_i} \quad i=1,\ldots,N,
	 $
	    which is guaranteed by \eqref{eq:assumAlphai}, i.e., $\mathcal{L}^{k+1}\leq \mathcal{L}^k$. Together with \eqref{eq:LyapunovIneq}, this yields that
	    \begin{align*}
	    \begin{array}{ll}
	        \sum_{k=0}^p \sum_{i=1}^N a_i \D(\bm x^{k,i},\bm x^{k,i-1})
	        +b_i\D(\bm x^{k-1,i},\bm x^{k-1,i-1})
	        &\leq \sum_{k=0}^p \left(\mathcal{L}^k-\mathcal{L}^{k+1}\right) \\
	        &= \mathcal{L}^0-\mathcal{L}^{p+1} \leq \mathcal{L}^0-\inf \mathcal{L}< +\infty.
	        \end{array}
	    \end{align*}
	Let $p\to +\infty$, the result follows from $\D(\cdot,\cdot)\geq 0$ and block strong convexity of $h_i$.
	\end{proof}
	
	In convergence analysis of proximal algorithms, one usual assumption is the boundedness of $\seq{\bm x^k}$; cf.,  \cite{attouch2010proximal,bolte2018first}. A sufficient condition for this is given next. 
		
	\begin{cor}[boundedness of iterations]
	\label{cor:iterComplexity}
	    Suppose that all assumptions of \Cref{cor:descentProp} hold. Further, if $\varphi$  has bounded level sets, then the sequence $\seq{\bm x^k}$ is bounded.
	\end{cor}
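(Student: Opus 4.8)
The plan is to exploit the monotonicity of the Lyapunov sequence established in \Cref{cor:descentProp}. Since all the hypotheses of that proposition are in force, the sequence $\seq{\mathcal{L}^k}$ is non-increasing, so that $\mathcal{L}^k \le \mathcal{L}^0$ for every $k$. Here $\mathcal{L}^0 = \Phi(\bm x^0)+\sum_{i=1}^N \delta_i \D(\bm x^{-1,i},\bm x^{-1,i-1})$ is a fixed finite constant, because $\bm x^0\in\interior\dom h_1$ and the initialization places $\bm x^{-1}$ in $\interior\dom h_1$ (e.g.\ $\bm x^{-1}=\bm x^0$), so that $\Phi(\bm x^0)$ and all the Bregman terms entering $\mathcal{L}^0$ are finite.

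Next I would pass from $\mathcal{L}$ to the objective. Recalling the definition \eqref{eq:LyapunovFunc}, we have $\mathcal{L}^k = \Phi(\bm x^k)+\sum_{i=1}^N \delta_i \D(\bm x^{k-1,i},\bm x^{k-1,i-1})$. Since each $\delta_i\ge 0$ and the Bregman distance is nonnegative by the $i$-th block convexity of $h_i$, the sum is nonnegative, whence $\Phi(\bm x^k)\le \mathcal{L}^k\le \mathcal{L}^0$ for all $k$. Consequently every iterate lies in the sublevel set $\set{\bm x\in\overline{C}}[\Phi(\bm x)\le \mathcal{L}^0]$.

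By hypothesis the objective $\Phi$ (written $\varphi$ in the statement) has bounded level sets, so this sublevel set is bounded; therefore the sequence $\seq{\bm x^k}$ is contained in a bounded set and is itself bounded, as claimed. The argument is essentially immediate: the only point requiring care is the finiteness of the upper bound $\mathcal{L}^0$, which is guaranteed by the interiority of the initial (and inertial) points so that no $+\infty$ enters the chain of inequalities.
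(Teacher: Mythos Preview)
Your argument is correct and matches the paper's proof essentially step for step: both invoke the non-increasing property of $\seq{\mathcal{L}^k}$ from \Cref{cor:descentProp}, drop the nonnegative Bregman terms to obtain $\Phi(\bm x^k)\le \mathcal{L}^k\le \text{const}$, and then appeal to the bounded level sets of $\Phi$. The only cosmetic difference is that the paper takes $\mathcal{L}^1=\mathcal{L}(\bm x^1,\bm x^0)$ rather than $\mathcal{L}^0$ as the finite upper bound.
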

		
	\begin{proof}
	    It follows from \Cref{cor:descentProp} that $\mathcal{L}(\bm x^{k+1}, \bm x^k)$ is non-increasing, hence
	    \begin{align*}
	    \begin{array}{ll}
	        \Phi(\bm x^{k+1})&\leq \mathcal{L}(\bm x^{k+1},\bm x^{k})= \Phi(\bm x^{k+1})+\sum_{i=1}^N \delta_i \D(\bm x^{k,i},\bm x^{k,i-1})\\
	        &\leq \mathcal{L}(\bm x^{1},\bm x^{0})= \Phi(\bm x^{1})+\sum_{i=1}^N \delta_i \D(\bm x^{0,i},\bm x^{0,i-1})< \infty.
	        \end{array}
	    \end{align*}
	    Hence, $
	    \mathcal{N}(\bm x^1,\bm x^0):=\set{\bm x\in\R^n}[\Phi(\bm x) \leq \Phi(\bm x^{1})+\sum_{i=1}^N \delta_i \D(\bm x^{0,i},\bm x^{0,i-1})]
	    $
	     encompasses $\seq{\bm x^k}$, i.e., $\seq{\bm x^k}\subseteq \mathcal{N}(\bm x^1,\bm x^0)$. Since $\varphi$ has bounded level sets, we have  $\seq{\bm x^k}$ are bounded. 
	\end{proof}
	
	\noindent The next proposition provides a lower bound for
	$
	\sum_{i=1}^{N}\sqrt{\D_{h}(\bm x^{k,i},\bm x^{k,i-1})}+\sqrt{\D_{h}(\bm x^{k-1,i},\bm x^{k-1,i-1})}.
	$
	\begin{prop}[subgradient lower bound for iterations gap]
	\label{pro:subgradLowBound1}
	    Let  \Cref{ass:basic:fgh} and \Cref{ass:T} hold, let $\seq{\bm x^k}$ be generated by \refBIBPA[], and let $h_i$ ($i\in [N]$) be $\sigma_i$-block strongly convex. Furthermore, suppose that $\nabla_i f$, $\nabla_i h$, ($i=1,\ldots,N$) are locally Lipschitz on bounded sets with Lipschitz moduli $\widehat L$ and $\widetilde L_i>0$, $\nabla_{ii}^2 h_i$ is bounded on bounded set with constants $\overline{L}_i$ ($i\in [N]$) and that the sequence $\seq{\bm x^k}$ is bounded. For a fixed $k\in\N$ and $j\in [N]$, we define
	     \begin{equation}\label{eq:pxkp1}
	        \mathcal{G}_j^{k+1}:= (\mathcal{V}_j^{k+1}, \mathcal{W}_j^{k+1}),
	    \end{equation}
	    where
	    \begin{align*}
	    \begin{array}{ll}
	        \mathcal{V}_j^{k+1}&:= \sum\limits_{i=j}^N \delta_i (\nabla_j h_i(\bm x^{k,i})-\nabla_j h_i(\bm x^{k,i-1}))+\tfrac{1}{\gamma_j^k} (\nabla_j h_j(\bm x^{k,j-1})-\nabla_j h_j(\bm x^{k,j})) \\
	        &~~~+\tfrac{\alpha_j^k}{\gamma_j^k}(x_j^k-x_j^{k-1}) 
	        +\nabla_j f(\bm x^{k+1})-\nabla_j f(\bm x^{k,j-1})\\
	        \mathcal{W}_j^{k+1}&:= \sum\limits_{i=1}^{j-1} \delta_i (\nabla_j h_i(\bm x^{k,i})-\nabla_j h_i(\bm x^{k,i-1}))-\nabla_{jj}^2 h_j(\bm x^{k,j-1})(x_j^{k+1}-x_j^{k}).
	        \end{array}
	    \end{align*}
	If $h_i$, $i\in [N]$, is block strongly convex, then $\mathcal{G}^{k+1}:=\left(\mathcal{G}_1^{k+1},\ldots,\mathcal{G}_N^{k+1}\right)\in \partial \mathcal{L}(\bm x^{k+1},\bm x^{k})$ and
	    \begin{equation}\label{eq:subGradUpBound}
	        \|\mathcal{G}^{k+1}\|\leq \overline{c} \sum_{i=1}^{N}\sqrt{\D_{h}(\bm x^{k,i},\bm x^{k,i-1})}+\widehat{c}\sum_{i=1}^{N}\sqrt{\D_{h}(\bm x^{k-1,i},\bm x^{k-1,i-1})},
	    \end{equation}
	    with 
	    \begin{align*}
	    \begin{array}{ll}
	        &\overline{c}:= \max \set{\sqrt{\nicefrac{2}{\sigma_1}},\ldots,\sqrt{\nicefrac{2}{\sigma_N}}}\left(N\left(\widehat{L}+\max\set{\delta_1 \widetilde{L}_1,\ldots,\delta_N \widetilde{L}_N}\right)+ \max \set{\tfrac{\widetilde{L}_1}{\gamma_1}+\overline{L}_1 ,\ldots,\tfrac{\widetilde{L}_N}{\gamma_N}+\overline{L}_N }\right),\\
	        &\widehat{c}:= \max \set{\sqrt{\nicefrac{2}{\sigma_1}},\ldots,\sqrt{\nicefrac{2}{\sigma_N}}}
	        \max \set{\tfrac{\sigma_1(1-\gamma_1 L_1)}{\gamma_1},\ldots,\tfrac{\sigma_N(1-\gamma_N L_N)}{\gamma_N}}.
	        \end{array}
	    \end{align*}
	\end{prop}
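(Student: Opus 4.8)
The plan is to exhibit $\mathcal{G}^{k+1}$ as an explicit element of $\partial\mathcal{L}(\bm x^{k+1},\bm x^{k})$ assembled from the $N$ per-block optimality conditions, and then to bound its norm by converting every gradient increment into the Bregman quantities appearing in \eqref{eq:subGradUpBound}.

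\emph{Step 1 (optimality of the block updates).} For each $j\in[N]$, the iterate $x_j^{k,j}\in\T(\bm x^{k,j-1},\bm x^{k-1})$ minimizes the subproblem in \eqref{eq:tx} with base point $\bm x^{k,j-1}$. Applying Fermat's rule \cite{rockafellar2011variational} and computing the $z$-gradient of the Bregman term at the minimizer (which equals $\nabla_j h_j(\bm x^{k,j})-\nabla_j h_j(\bm x^{k,j-1})$ since $\bm x^{k,j}=\bm x^{k,j-1}+U_j(x_j^{k,j}-x_j^{k,j-1})$), I obtain a subgradient $v_j\in\partial g_j(x_j^{k+1})$ with
\[
v_j = -\nabla_j f(\bm x^{k,j-1}) + \tfrac{\alpha_j^k}{\gamma_j^k}(x_j^k-x_j^{k-1}) + \tfrac{1}{\gamma_j^k}\bigl(\nabla_j h_j(\bm x^{k,j-1})-\nabla_j h_j(\bm x^{k,j})\bigr),
\]
where I used $x_j^{k,j}=x_j^{k+1}$.

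\emph{Step 2 (subdifferential of the Lyapunov function).} Writing $\mathcal{L}=\Phi+\Psi$ with $\Psi$ the $\delta_i$-weighted Bregman part of \eqref{eq:LyapunovFunc}, the $\mathcal{C}^1$-smoothness of $f$ and each $h_i$ on $\interior\dom h_i$ and the separability $g=\sum_i g_i$ give, by the sum rule \cite{rockafellar2011variational}, $\partial\mathcal{L}(\bm x^{k+1},\bm x^{k})=\bigl(\nabla_{\bm x}\Psi+\nabla f(\bm x^{k+1})+\partial g(\bm x^{k+1}),\ \nabla_{\bm y}\Psi\bigr)$. Differentiating $\Psi$ blockwise and substituting the identities $\bm u_i=\bm x^{k,i}$, $\bm w_i=\bm x^{k,i-1}$ that follow from \eqref{eq:xki}, I then feed the $g_j$-component with the subgradient $v_j$ of Step 1; since $\nabla_j f(\bm x^{k+1})+v_j$ reproduces precisely the $f$-, inertial-, and $\tfrac{1}{\gamma_j^k}$-terms of $\mathcal{V}_j^{k+1}$, the $x_j$- and $y_j$-components collapse to $\mathcal{V}_j^{k+1}$ and $\mathcal{W}_j^{k+1}$, verifying $\mathcal{G}^{k+1}\in\partial\mathcal{L}(\bm x^{k+1},\bm x^{k})$.

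\emph{Step 3 (the norm estimate).} Boundedness of $\seq{\bm x^k}$ places all iterates in a common bounded set on which $\nabla_j f$ is $\widehat L$-Lipschitz, $\nabla_j h_i$ is $\widetilde L_i$-Lipschitz, and $\nabla_{jj}^2 h_j$ is bounded by $\overline L_j$. Estimating each difference $\nabla_j f(\bm x^{k+1})-\nabla_j f(\bm x^{k,j-1})$, $\nabla_j h_i(\bm x^{k,i})-\nabla_j h_i(\bm x^{k,i-1})$, and the Hessian term by these moduli reduces $\|\mathcal{G}^{k+1}\|$ to a weighted sum of increments $\|\bm x^{k,i}-\bm x^{k,i-1}\|$, together with the inertial contribution, for which $\|x_j^k-x_j^{k-1}\|=\|\bm x^{k-1,j}-\bm x^{k-1,j-1}\|$. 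Finally, $\sigma_i$-block strong convexity yields $\D(\bm x^{k,i},\bm x^{k,i-1})\ge\tfrac{\sigma_i}{2}\|\bm x^{k,i}-\bm x^{k,i-1}\|^2$, hence $\|\bm x^{k,i}-\bm x^{k,i-1}\|\le\sqrt{2/\sigma_i}\,\sqrt{\D(\bm x^{k,i},\bm x^{k,i-1})}$ and likewise for the $(k-1)$-increments; collecting coefficients via the triangle inequality produces exactly $\overline c$, $\widehat c$, and the bound \eqref{eq:subGradUpBound}.

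The main obstacle is Step 2. Because the kernels $h_i$ are jointly coupled across blocks (as in the SymTriNMF example), differentiating the Bregman terms of $\mathcal{L}$ generates cross-block gradient and Hessian contributions, and the careful bookkeeping needed to regroup them into the stated $\mathcal{V}_j^{k+1},\mathcal{W}_j^{k+1}$—and to confirm that the $g_j$-subgradients cancel correctly against the optimality conditions of Step 1—is where all the delicacy lies; any residual cross terms are of the form $\overline L_i\|\bm x^{k,i}-\bm x^{k,i-1}\|$ and therefore absorb into the same $\sqrt{\D}$-estimates, so the structure of \eqref{eq:subGradUpBound} is preserved. The estimates in Step 3 are then routine once the strong-convexity lower bound on $\D$ is invoked.
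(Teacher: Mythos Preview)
Your proposal is correct and follows essentially the same route as the paper: write the optimality condition of each block subproblem to extract a subgradient of $g_j$, plug it into the explicit block-wise subdifferential formulas for $\mathcal{L}$ to verify $\mathcal{G}^{k+1}\in\partial\mathcal{L}(\bm x^{k+1},\bm x^{k})$, then bound each gradient/Hessian increment by its Lipschitz/boundedness modulus and convert the resulting $\|\bm x^{k,i}-\bm x^{k,i-1}\|$ terms into $\sqrt{\D}$ via block strong convexity. One small remark: your hedge in the last paragraph is unnecessary---the block-wise differentiation of the Bregman part of $\mathcal{L}$ yields \emph{exactly} the sums $\sum_{i=j}^N$ and $\sum_{i=1}^{j-1}$ of $\delta_i(\nabla_j h_i(\bm x^{k,i})-\nabla_j h_i(\bm x^{k,i-1}))$ together with the single Hessian term, so there are no residual cross terms to absorb; the formulas for $\mathcal{V}_j^{k+1}$ and $\mathcal{W}_j^{k+1}$ come out on the nose.
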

	
	\begin{proof}
		Following \cite[Chapter 10]{rockafellar2011variational}, the subdifferential of $\mathcal{L}$ at $(x^{k+1},x^k)$ is given by 
		\begin{equation}\label{eq:subDiffLyap}
		\partial \mathcal{L}(\bm x^{k+1}, \bm x^k)=\left(\partial_{\bm x^{k+1}} \mathcal{L}(\bm x^{k+1},\bm x^k), \partial_{\bm x^{k}} \mathcal{L}(\bm x^{k+1}, \bm x^k)\right), 
		\end{equation}
		where, for $j=1,\ldots,N$ , by applying \cite[Exercise 8.8]{rockafellar2011variational} we have 
		\begin{align}
			& \partial_{\bm x_j^{k+1}} \mathcal{L}(\bm x^{k+1},\bm x^k)=\nabla_j f(\bm x^{k+1})+\partial g_j(\bm x_j^{k+1})+\sum_{i=j}^N \delta_i (\nabla_j h_i(\bm x^{k,i})-\nabla_j h_i(\bm x^{k,i-1}));\label{eq:subDiffLyapPar1}\\
			& \partial_{\bm x_j^{k}} \mathcal{L}(\bm x^{k+1},\bm x^k)=\sum_{i=1}^{j-1} \delta_i (\nabla_j h_i(\bm x^{k,i})-\nabla_j h_i(\bm x^{k,i-1}))-\nabla_{jj}^2 h_j(\bm x^{k,j-1})(x_j^{k+1}-x_j^{k}). \label{eq:subDiffLyapPar2}
		\end{align}
	    Writing the first-order optimality conditions for the subproblem \eqref{eq:tx} implies that there exists a subgradient $\eta_j^{k+1}\in\partial g_j(x_j^{k+1})$ such that
	    \[
	        \nabla_j f(\bm x^{k,j-1})-\tfrac{\alpha_j^k}{\gamma_j^k}(x_j^k-x_j^{k-1})+\tfrac{1}{\gamma_j^k} \left(\nabla_j h_j(\bm x^{k,j})-\nabla_j h_j(\bm x^{k,j-1})\right)+\eta_j^{k+1}=0 \quad j\in [N],
	    \]
	    which implies $ \eta_j^{k+1}=\tfrac{1}{\gamma_j^k} \left(\nabla_j h_j(\bm x^{k,j-1})-\nabla_j h_j(\bm x^{k,j})\right)+\tfrac{\alpha_j^k}{\gamma_j^k}(x_j^k-x_j^{k-1})-\nabla_j f(\bm x^{k,j-1})$, $ j \in [N]$.    
	    	Therefore, we have $
		 	\mathcal{V}_j^{k+1}=\nabla_j f(\bm x^{k+1})+\eta_j^{k+1}+\sum_{i=j}^N \delta_i (\nabla_j h_i(\bm x^{k,i})-\nabla_j h_i(\bm x^{k,i-1}))\in \partial_{\bm x_j^{k+1}} \mathcal{L}(\bm x^{k+1},\bm x^k),$
	    which implies $\mathcal{G}^{k+1}\in \partial \mathcal{L}(\bm x^{k+1}, \bm x^k)$. 
	    Together with the Lipschitz continuity of $\nabla_i f$, $\nabla_i h_i$ and the boundedness of $\nabla_{ii}^2 h_i$ on bounded sets, the boundedness of $\seq{\bm x^k}$, and the triangle inequality, this implies that there exist constants $\widehat L,~ \widehat L_i,~ \overline{L}_i>0$ (for $i\in [N]$) such that 
	     \begin{align*}
	    &    \|\mathcal{G}_j^{k+1}\|=\|\mathcal{V}_j^{k+1}\|+\|\mathcal{W}_j^{k+1}\|\leq
	        \tfrac{\alpha_j^k}{\gamma_j^k} \|x_j^k-x_j^{k-1}\| 
	        +\|\nabla_j f(\bm x^{k+1})-\nabla_j f(\bm x^{k,j-1})\| \\
	        &~~~+\sum_{i=1}^N \delta_i \|\nabla_j h_i(\bm x^{k,i})-\nabla_j h_i(\bm x^{k,i-1})\|+\tfrac{1}{\gamma_j^k} \|\nabla_j h_j(\bm x^{k})-\nabla_j h_j(\bm x^{k,1})\|+\|\nabla_{jj}^2 h_j(\bm x^{k,j-1})\| ~\|x_j^{k+1}-x_j^{k}\|\\
	        &\leq \tfrac{\alpha_j^k}{\gamma_j^k}\|x_j^k-x_j^{k-1}\|+ \widehat L \sum_{i=1}^{N}\|x_i^{k+1}-x_i^k\|+ \sum_{i=1}^N \delta_i \widetilde{L}_i \|x_i^{k+1}-x_i^k\|+\left(\tfrac{\widetilde{L}_j}{\gamma_j^k}+\overline{L}_i\right) \|x_j^{k+1}-x_j^k\|.
	    \end{align*}
	    Combining the last two inequalities with  \eqref{eq:assumAlphai}, it can be deduced that
	    \begin{align*}
	    \begin{array}{ll}
	        \|\mathcal{G}^{k+1}\|
	       & \leq\left(N\left(\widehat{L}+\max\set{\delta_1 \widetilde{L}_1,\ldots,\delta_N \widetilde{L}_N}\right)+\max \set{\tfrac{\widetilde{L}_1}{\gamma_1^k}+\overline{L}_1 ,\ldots,\tfrac{\widetilde{L}_N}{\gamma_N^k}+\overline{L}_N }\right) \sum_{i=1}^{N}\|x_i^{k+1}-x_i^k\|\\
	       &~~~
	        +\max \set{\tfrac{\alpha_1^k}{\gamma_1^k},\ldots,\tfrac{\alpha_N^k}{\gamma_N^k}} \sum_{i=1}^{N}\|x_i^k-x_i^{k-1}\|\\
	        & \leq\left(N\left(\widehat{L}+\max\set{\delta_1 \widetilde{L}_1,\ldots,\delta_N \widetilde{L}_N}\right)+ \max \set{\tfrac{\widetilde{L}_1}{\gamma_1}+\overline{L}_1 ,\ldots,\tfrac{\widetilde{L}_N}{\gamma_N}+\overline{L}_N }\right) \sum_{i=1}^{N}\|x_i^{k+1}-x_i^k\|\\
	        &~~~+\max \set{\tfrac{\sigma_1(1-\gamma_1 L_1)}{\gamma_1},\ldots,\tfrac{\sigma_N(1-\gamma_N L_N)}{\gamma_N}} \sum_{i=1}^{N}\|x_i^k-x_i^{k-1}\|\\
	        &\leq \overline{c} \sum_{i=1}^{N}\|x_i^{k+1}-x_i^k\|+\widehat{c}\sum_{i=1}^{N}\|x_i^k-x_i^{k-1}\|.
	        \end{array}
	    \end{align*}
	    Hence, it follows from the block strong convexity of $h_i$ ($i=1,\ldots,N$) that
	    \begin{align*}
	    \begin{array}{ll}
	        \|\mathcal{G}^{k+1}\|
	        &\leq \overline{c} \sum_{i=1}^{N}\|x_i^{k+1}-x_i^k\|+\widehat{c}\sum_{i=1}^{N}\|x_i^k-x_i^{k-1}\|\\
	        &\leq \overline{c} \sum_{i=1}^{N}\sqrt{\D_{h}(\bm x^{k,i},\bm x^{k,i-1})}+\widehat{c}\sum_{i=1}^{N}\sqrt{\D_{h}(\bm x^{k-1,i},\bm x^{k-1,i-1})},
	        \end{array}
	    \end{align*}
	    giving our desired result.
	\end{proof}
	
	\vspace{-6mm}
	\begin{rem}
	Note that a uniformly continuous function maps bounded sets to bounded sets. Therefore, in \Cref{pro:subgradLowBound1}, if the function $\nabla_{ii}^2 h_i$ ($i=1,\ldots,N$) is uniformly continuous, it is bounded on bounded sets.
	\end{rem}
	
	Applying \Cref{pro:subgradLowBound1}, the \DEF{subsequential convergence} of $\seq{\bm x^k}$ generated by \refBIBPA[] is presented next. On top of that we explain some basic properties of $\omega(\bm x^0)$.

	\begin{ass}
	$\overline{C}\subseteq \interior\dom h_1$.
	\end{ass}
	\begin{thm}[subsequential convergence and properties of $\omega(\bm x^0)$]
	\label{pro:chatClusterPoint10}
	 Let all assumptions of \Cref{pro:subgradLowBound1} and \Cref{ass:T} hold. Then, the following assertions are satisfied:
	    \begin{enumerate}
	        \item \label{pro:chatClusterPoint11}
	            every cluster point of $\seq{\bm x^k}$ is a critical point of $\Phi$, i.e., $ \omega(\bm x^0)\subset \mathbf{crit}~ \Phi$;
	        \item \label{pro:chatClusterPoint12}
	            $\lim_{k\to\infty}\dist\left(\bm x^k,\omega(\bm x^0)\right)=0$;
	        \item \label{pro:chatClusterPoint13}
	        $\omega(\bm x^0)$ is a nonempty, compact, and connected set;
	        \item \label{pro:chatClusterPoint14}
	        the Lyapunov function $\mathcal{L}$ is finite and constant on $\omega(\bm x^0)$.
	    \end{enumerate}
	\end{thm}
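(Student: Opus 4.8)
The plan is to recognize this as the standard ``properties of the limit set'' conclusion of the abstract nonconvex framework, fed by the two inequalities already proved. \Cref{cor:descentProp} is the \emph{sufficient decrease} ingredient: it gives that $\seq{\mathcal{L}^k}$ is non-increasing and that $\lim_{k\to\infty}\D(\bm x^{k,i},\bm x^{k,i-1})=0$, hence $\lim_{k\to\infty}\|\bm x^{k,i}-\bm x^{k,i-1}\|=0$ for every $i\in[N]$; summing over the inner steps yields $\|\bm x^{k+1}-\bm x^k\|\to0$. \Cref{pro:subgradLowBound1} is the \emph{relative-error} ingredient: the explicit vector $\mathcal{G}^{k+1}\in\partial\mathcal{L}(\bm x^{k+1},\bm x^k)$ obeys $\|\mathcal{G}^{k+1}\|\le\overline c\sum_i\sqrt{\D(\bm x^{k,i},\bm x^{k,i-1})}+\widehat c\sum_i\sqrt{\D(\bm x^{k-1,i},\bm x^{k-1,i-1})}\to0$. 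Boundedness of $\seq{\bm x^k}$ is part of the standing hypotheses, and the assumption $\overline C\subseteq\interior\dom h_1$ guarantees that every cluster point lies in $\interior\dom h_i$, so that $\nabla h_i$ and $\D(\cdot,\cdot)$ are continuous there.

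For assertion (1) I would fix $\bm x^*\in\omega(\bm x^0)$ with a subsequence $\bm x^{k_j}\to\bm x^*$; since consecutive and intra-iterate gaps vanish, the intermediate points $\bm x^{k_j,i}$ and the successor $\bm x^{k_j+1}$ all converge to $\bm x^*$ as well. The decisive step, which I expect to be the main obstacle, is the \emph{value convergence} $\mathcal{L}(\bm x^{k_j+1},\bm x^{k_j})\to\mathcal{L}(\bm x^*,\bm x^*)=\Phi(\bm x^*)$, required to invoke closedness of the limiting subdifferential $\partial$ \cite{rockafellar2011variational}. The Bregman terms in $\mathcal{L}$ tend to $0$, so this reduces to $\Phi(\bm x^{k_j+1})\to\Phi(\bm x^*)$. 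The bound $\liminf_j\Phi(\bm x^{k_j+1})\ge\Phi(\bm x^*)$ is just lower semicontinuity of $\Phi=f+g$ (from \Cref{ass:basic:g} and $f\in\mathcal C^1$). For the matching $\limsup$ bound I would exploit that $x_i^{k_j,i}\in\T(\bm x^{k_j,i-1},\bm x^{k_j-1})$ minimizes the subproblem in \eqref{eq:tx}: evaluating its objective at the competitor $z=x_i^*$ and passing to the limit (the inner products and Bregman distances vanish by continuity and the vanishing gaps) gives $\limsup_j g_i(x_i^{k_j+1})\le g_i(x_i^*)$ block by block, and adding the continuous $f$ yields $\limsup_j\Phi(\bm x^{k_j+1})\le\Phi(\bm x^*)$.

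Once value convergence is in hand, $\mathcal{G}^{k_j+1}\to0$ together with closedness of $\partial$ gives $0\in\partial\mathcal{L}(\bm x^*,\bm x^*)$. Evaluating the subdifferential expressions \eqref{eq:subDiffLyapPar1}--\eqref{eq:subDiffLyapPar2} at the limit, where $\bm x^{k,i}$ and $\bm x^{k,i-1}$ both collapse to $\bm x^*$, makes all the $\nabla_j h_i$-differences and the Hessian term vanish, leaving $0\in\nabla f(\bm x^*)+\partial g(\bm x^*)=\partial\Phi(\bm x^*)$, i.e.\ $\bm x^*\in\mathbf{crit}\,\Phi$. Assertions (2) and (3) are then routine topological consequences: nonemptiness and compactness of $\omega(\bm x^0)$ follow from boundedness (the cluster-point set of a bounded sequence is nonempty, closed, and bounded), $\dist(\bm x^k,\omega(\bm x^0))\to0$ follows by a standard subsequence-extraction argument, and connectedness follows from the Ostrowski-type lemma that a bounded sequence with $\|\bm x^{k+1}-\bm x^k\|\to0$ has a connected set of cluster points.

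Finally, for assertion (4) I would observe that $\seq{\mathcal{L}^k}$ is non-increasing by \Cref{cor:descentProp} and bounded below, since $\Phi$ is bounded below on $\overline C$ by \Cref{ass:basic:argmin} and the Bregman terms are nonnegative; hence $\mathcal{L}^k\downarrow\ell$ for some finite $\ell$. The value convergence established in (1) shows $\mathcal{L}^{k_j+1}\to\Phi(\bm x^*)$ along every subsequence approaching a cluster point, so $\Phi(\bm x^*)=\ell=\mathcal{L}(\bm x^*,\bm x^*)$ independently of $\bm x^*\in\omega(\bm x^0)$, which gives both finiteness and constancy of $\mathcal{L}$ on the limit set.
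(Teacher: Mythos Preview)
Your proposal is correct and follows essentially the same approach as the paper's proof: the paper likewise uses the subproblem optimality \eqref{eq:tx} evaluated at the competitor $x_i^\star$ to obtain $\limsup_j g_i(x_i^{k_j})\le g_i(x_i^\star)$, combines this with lower semicontinuity to get value convergence of $\mathcal{L}$, and then invokes closedness of $\partial\mathcal{L}$ together with $\mathcal{G}^{k+1}\to0$ and the identity $\partial\mathcal{L}(\bm x^\star,\bm x^\star)=(\partial\Phi(\bm x^\star),0)$. For assertions (2)--(4) the paper is terser than you, simply citing \cite[Lemma~5(iii)--(iv)]{bolte2014proximal}, but your spelled-out Ostrowski-type and monotone-limit arguments are exactly what that reference contains.
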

	
	\begin{proof}
	    Let us assume $\bm x^\star=(x_1^\star,\ldots,x_N^\star)\in\omega(\bm x^0)$. The boundedness of $\seq{\bm x^k}$ implies that there exists an infinite index set $\mathcal{J}\subset \N$ such that the subsequence $\seq{\bm x^k}[k\in\mathcal{J}]\to \bm x^\star$ as $k\to\infty$. 
	    It follows from \eqref{eq:xik1} that
	    \begin{equation}\label{eq:inprodIneq}
	        \begin{array}{ll}
	            \innprod{\nabla_i f(\bm x^{k,i-1})-\tfrac{\alpha_i^k}{\gamma_i^k}(x_i^k- x_i^{k-1})}{x_i^{k+1}-x_i^k}
	        +\tfrac{1}{\gamma_i^k}\D(\bm x^{k,i},\bm x^{k,i-1})+g_i(x_i^{k+1})\\
	        \leq \innprod{\nabla_i f^k(\bm x^{k,i-1})-\tfrac{\alpha_i^k}{\gamma_i^k}(x_i^k- x_i^{k-1})}{x_i^\star-x_i^k}
	        +\tfrac{1}{\gamma_i^k}\D(\bm x^\star,\bm x^{k,i-1})+g_i(x_i^\star).
	        \end{array}
	    \end{equation}
	    Invoking \Cref{cor:descentProp} and using block strong convexity of $h_i$, there exist $\varepsilon_i^\star>0$, $k_i^0\in\N$, and a neighborhood $\mathbf{B}(x_i^\star,\varepsilon_i^\star)$ such that 
	    $
	        \lim_{k\to\infty} \tfrac{\sigma_i}{2} \|x_i^{k+1}-x_i^k\|^2 \leq\lim_{k\to\infty} \D_{h_i}(\bm x^{k,i},\bm x^{k,i-1})=0, \quad x_i^k\in \mathbf{B}(x_i^\star$, $\varepsilon_i^\star),~ i\in [N],
	   $
	for $k\geq k_i^0$ and $k\in\mathcal{J}$, i.e., $\lim_{k\to\infty} (x_i^{k+1}-x_i^k)=0$.  Hence, substituting $k=k_j-1$ for $k_j\in\mathcal{J}$ into \eqref{eq:inprodIneq} and taking the limit from both sides of \eqref{eq:inprodIneq}, we derive
	   $
	        \limsup_{j\to\infty} g_i(x_i^{k_j})\leq g_i(x_i^\star)\quad  i=1 \in [N].
	  $
	Furthermore, since $g_i$ is lsc, this yields that $\lim_{j\to\infty} g_i(x_i^{k_j})= g_i(x_i^\star)$, then
	    \begin{align*}
	        \lim_{j\to\infty} \mathcal{L}(\bm x^{k_j+1},\bm x^{k_j}) &= \lim_{j\to\infty} \left(f(x_1^{k_j},\ldots,x_N^{k_j})+\sum_{i=1}^N g_i(x_i^{k_j})+\sum_{i=1}^N \delta_i \D(\bm x^{k_j,i},\bm x^{k_j,i-1})\right)= \mathcal{L}(\bm x^\star,\bm x^\star).
	    \end{align*}
	    Hence, from \eqref{eq:subGradUpBound} and \Cref{cor:descentProp}, we obtain
	    \begin{align*}
	        \lim_{k\to+\infty} \|\mathcal{G}^{k+1}\|&\leq \lim_{k\to+\infty} \left(\overline{c} \sum_{i=1}^{N} \sqrt{\D_{h_i}(\bm x^{k,i},\bm x^{k,i-1})}+\widehat{c}\sum_{i=1}^{N}\sqrt{\D_{h_i}(\bm x^{k-1,i},\bm x^{k-1,i-1})}\right) = 0,
	    \end{align*} 
	    which consequently yields $\lim_{k\to\infty} \mathcal{G}^{k+1}=0$. As a result, we have $0\in\partial \mathcal{L} (x^\star,x^\star)$, owing to the closedness of the subdifferential mapping $\partial \mathcal{L}$. The result of \Cref{pro:chatClusterPoint11} follows from the fact $ \partial \mathcal{L}(\bm x^\star,\bm x^\star)=\left(\partial \Phi(\bm x^\star), 0\right)$.
	 Moreover, \Cref{pro:chatClusterPoint12} is a straightforward consequence of \Cref{pro:chatClusterPoint11}, and \Cref{pro:chatClusterPoint13} and \Cref{pro:chatClusterPoint14} can be proved in the same way as \cite[Lemma 5(iii)-(iv)]{bolte2014proximal}.
	\end{proof}

	\subsection{Global convergence for K{{\L}} functions}
	In this section, we consider the class of Kurdyka-{\L}ojasiewicz (K{{\L}}) functions (see \cite{kurdyka1998gradients,lojasiewicz1993geometrie}) and show that for such functions the sequence $\seq{\bm x^k}$  converges to a critical point $x^\star$.
	
	\begin{defin}[K{{\L}} property]\label{def:klFunctions1}
	A proper and lsc function $\func{\varphi}{\R^{n}}{\Rinf}$ has the K{{\L}} property 
	 at $\bm x^\star\in\dom \varphi$ if there exist a concave function 
	$\psi:[0,\eta]\to{[0,+\infty[}$ (with $\eta>0$) and neighborhood $\ball{\bm x^\star}{\varepsilon}$ with $\varepsilon>0$, such that
	 (i) $\psi(0)=0$;
	(ii) $\psi$ is of class $\mathcal{C}^1$ with $\psi>0$ on $(0,\eta)$;
	(ii) for all $\bm x\in \ball{\bm x^\star}{\varepsilon}$ such that $\varphi(\bm x^\star)<\varphi(\bm x)<\varphi(\bm x^\star)+\eta$ it holds that
	\begin{equation}\label{eq:klProperty1}
	    \psi'(\varphi(\bm x)-\varphi(\bm x^\star))\dist(0,\partial \varphi(\bm x))\geq 1.
	\end{equation}
	If this property holds for each point of $\dom \partial \varphi$, the $\varphi$ is a \DEF{K{{\L}} function}.
	\end{defin}
	
	In \cite{lojasiewicz1963propriete,lojasiewicz1993geometrie}, Stanis{\l}aw {\L}ojasiewicz showed for the first time that every real analytic function\footnote{A function $\func{\varphi}{\R^n}{\Rinf}$ said to be real analytic if it can be represented by a convergent power series.} satisfies \eqref{eq:klProperty1} with $\psi(s):=\frac{\kappa}{1-\theta} s^{1-\theta}$ with $\theta\in [0,1)$. 
	In 1998,  Kurdyka \cite{kurdyka1998gradients} proved that this inequality is valid for $\mathcal{C}^1$ functions whose graph belong to an \DEF{$o$-minimal structure} (see its definition in \cite{van1998tame}). Later,  \eqref{eq:klProperty1} was extended for nonsmooth functions in \cite{bolte2007clarke,bolte2007lojasiewicz,bolte2010characterizations}.
	
	The K{{\L}} property \eqref{eq:klProperty1} of the underlying objective function plays a key role in establishing the global convergence of a generic algorithm for nonconvex problems; however, this is not sufficient and one also needs some additional conditions to be guaranteed by the algorithm (see below). 
	 In particular, for several algorithms the cost functions satisfy the sufficient decrease condition (cf. \cite{ahookhosh2019multi,attouch2013convergence,bolte2014proximal}), while for some others the sufficient decrease condition is satisfied for some Lyapunov functions (cf. \cite{frankel2015splitting,ochs2014ipiano,ochs2019unifying,pock2016inertial,zhang2019inertial}). 
	
	As shown in \Cref{cor:descentProp}, \Cref{pro:subgradLowBound1}, and \Cref{pro:chatClusterPoint10} (see its proof), the sequence $\seq{\bm x^k}$ generated by \refBIBPA[] satisfies the following conditions that are non-Euclidean extension of those given in \cite{attouch2013convergence,bolte2014proximal} for the structured problem \eqref{eq:P}:
	\begin{enumerate}
	    \item \label{ass:suffCond}(\DEF{sufficient descent condition})
	    For each $k\in\N$ and $a_i, b_i\geq 0$ ($i=1,\ldots,N$),
	    \begin{align*}
	         \sum_{i=1}^N \left(a_i\D(\bm x^{k,i},\bm x^{k,i-1})
	        +b_i \D(\bm x^{k-1,i},\bm x^{k-1,i-1})\right)\leq \mathcal{L}(\bm x^k,\bm x^{k-1})-\mathcal{L}(\bm x^{k+1},\bm x^k);
	    \end{align*}
	    \item \label{ass:subgradIneq} (\DEF{subgradient lower bound of iteration gap})
	    For each $k\in\N$, there exists a subgradient $\mathcal{G}^{k+1}\in \partial \mathcal{L}(\bm x^{k+1},\bm x^k)$ and $\overline{c}, \widehat{d}\geq 0$ such that
	    \begin{align*}
	         \|\mathcal{G}^{k+1}\|\leq  \overline{c}\sum_{i=1}^{N} \sqrt{\D(\bm x^{k,i},\bm x^{k,i-1})}+ \widehat{c}\sum_{i=1}^{N} \sqrt{\D(\bm x^{k-1,i},\bm x^{k-1,i-1})};
	    \end{align*}
	    \item \label{ass:continuity} (\DEF{continuity condition})
	    The function $\mathcal{L}$ is a K{{\L}} function, and each cluster point $\bm x^\star$ of $\seq{\bm x^k}$ ($\bm x^\star\in\omega(\bm x^0)$) satisfies $(\bm x^\star,\bm x^\star) \in \mathrm{crit}\mathcal{L}$ 
	\end{enumerate}
	We now use the above three conditions to prove that the whole sequence $\seq{\bm x^k}$ converges.
	\begin{thm}[global convergence]
	\label{thm:globConvergence}
	    Let all assumptions of \Cref{pro:subgradLowBound1} and \Cref{ass:T} hold. If $\mathcal{L}$ is a K{{\L}} function, then the following statements are true:
	    \begin{enumerate}
	        \item \label{thm:globConvergence1}
	        The sequence $\seq{\bm x^k}$ has finite length, i.e.,
	        \begin{equation}\label{eq:finiteLength1}
	            \sum_{k=1}^\infty \|x_i^{k+1}-x_i^k\|<\infty \quad i=1,\ldots,N;
	        \end{equation}
	        \item \label{thm:globConvergence2} 
	        The sequence $\seq{\bm x^k}$ converges to a stationary point $\bm x^\star$ of $\Phi$.
	    \end{enumerate}
	\end{thm}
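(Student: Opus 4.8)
The plan is to run the abstract Kurdyka--{\L}ojasiewicz convergence scheme of \cite{attouch2013convergence,bolte2014proximal}, but on the \emph{Lyapunov} function $\mathcal{L}$ instead of $\Phi$, as in the inertial and variable-metric analyses of \cite{ochs2014ipiano,ochs2019unifying,pock2016inertial}. Three ingredients are already available: the sufficient-descent inequality \eqref{eq:LyapunovIneq} of \Cref{cor:descentProp}, the subgradient bound \eqref{eq:subGradUpBound} of \Cref{pro:subgradLowBound1}, and the properties of $\omega(\bm x^0)$ together with $(\bm x^\star,\bm x^\star)\in\mathbf{crit}\,\mathcal{L}$ from \Cref{pro:chatClusterPoint10}. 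Since $\seq{\mathcal{L}^k}$ is non-increasing and bounded below, it converges, and \Cref{pro:chatClusterPoint10} forces its limit to equal $\mathcal{L}^\star:=\mathcal{L}(\bm x^\star,\bm x^\star)$ for every $\bm x^\star\in\omega(\bm x^0)$. If $\mathcal{L}^{k_0}=\mathcal{L}^\star$ for some $k_0$, then \eqref{eq:LyapunovIneq} makes all later Bregman gaps vanish and the iterates are eventually stationary, so I may assume $\mathcal{L}^k>\mathcal{L}^\star$ for all $k$ (here the step-size conditions \eqref{eq:assumAlphai} are taken in their strict form so that $a_i>0$).

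The heart of the proof is to show that $d_k:=\sum_{i=1}^N\sqrt{\D(\bm x^{k,i},\bm x^{k,i-1})}$ is summable. First I would pass to the uniformized K{\L} property: compactness and connectedness of $\omega(\bm x^0)$ (from \Cref{pro:chatClusterPoint10}) together with the constancy of $\mathcal{L}$ on it provide a single desingularizing function $\psi$ and a single neighborhood, valid at all pairs $(\bm x^{k+1},\bm x^k)$ once $k$ is large. The K{\L} inequality \eqref{eq:klProperty1}, combined with $\dist(0,\partial\mathcal{L}^{k+1})\le\|\mathcal{G}^{k+1}\|$ and \eqref{eq:subGradUpBound}, yields $\psi'(\mathcal{L}^{k+1}-\mathcal{L}^\star)\ge(\overline{c}\,d_k+\widehat{c}\,d_{k-1})^{-1}$. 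Concavity of $\psi$ and the descent inequality \eqref{eq:LyapunovIneq} (applied between $k+1$ and $k+2$) give, with $\Delta_k:=\psi(\mathcal{L}^{k+1}-\mathcal{L}^\star)-\psi(\mathcal{L}^{k+2}-\mathcal{L}^\star)$, the bound $\Delta_k\ge\psi'(\mathcal{L}^{k+1}-\mathcal{L}^\star)\sum_{i=1}^N a_i\D(\bm x^{k+1,i},\bm x^{k+1,i-1})$. Using the elementary inequality $\bigl(\sum_{i=1}^N\sqrt{t_i}\bigr)^2\le N\sum_{i=1}^N t_i$ to bound the last sum below by $\tfrac{\min_i a_i}{N}\,d_{k+1}^2$ and combining the two displays produces the recursion $d_{k+1}^2\le C\,\Delta_k\,(d_k+d_{k-1})$ for a constant $C$ depending only on $\overline{c},\widehat{c},\sigma_i,a_i$. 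Taking square roots and applying $2\sqrt{xy}\le\epsilon x+\epsilon^{-1}y$ gives $d_{k+1}\le\tfrac{\epsilon}{2}(d_k+d_{k-1})+\tfrac{C}{2\epsilon}\Delta_k$; summing over $k$ and telescoping $\sum_k\Delta_k\le\psi(\mathcal{L}^{K_0}-\mathcal{L}^\star)<\infty$ then yields $\sum_k d_k<\infty$.

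I expect the \emph{main obstacle} to be exactly this final absorption: the inertial terms give the recursion a two-step memory, so both $d_k$ and $d_{k-1}$ sit on the right-hand side, and one must verify that choosing $\epsilon$ small (and shifting the summation index, which alters only finitely many boundary terms) lets $\tfrac{\epsilon}{2}\sum_k(d_k+d_{k-1})$ be absorbed into $\sum_k d_{k+1}$ without circular reasoning -- this is precisely where the non-inertial argument of \cite{bolte2014proximal} must be upgraded to the Lyapunov framework of \cite{ochs2019unifying}. Once $\seq{d_k}$ is summable, the $\sigma_i$-block strong convexity of $h_i$ gives $\|x_i^{k+1}-x_i^k\|=\|\bm x^{k,i}-\bm x^{k,i-1}\|\le\sqrt{\nicefrac{2}{\sigma_i}}\sqrt{\D(\bm x^{k,i},\bm x^{k,i-1})}\le\sqrt{\nicefrac{2}{\sigma_i}}\,d_k$, so \eqref{eq:finiteLength1} follows and the finite-length assertion is proved. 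Finite length makes $\seq{\bm x^k}$ Cauchy, hence convergent to a single $\bm x^\star\in\omega(\bm x^0)$, and \Cref{pro:chatClusterPoint10} then identifies $\bm x^\star$ as a critical (stationary) point of $\Phi$, completing the proof.
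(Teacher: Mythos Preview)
Your proposal is correct and follows essentially the same strategy as the paper: both apply the K{\L} machinery to the Lyapunov function $\mathcal{L}$ rather than to $\Phi$, using \Cref{cor:descentProp}, \Cref{pro:subgradLowBound1}, and \Cref{pro:chatClusterPoint10} as the three pillars. The difference is purely one of packaging. The paper defines $d_k:=\sum_{i=1}^N\bigl(\sqrt{\D(\bm x^{k,i},\bm x^{k,i-1})}+\sqrt{\D(\bm x^{k-1,i},\bm x^{k-1,i-1})}\bigr)$ (bundling both the current and the previous Bregman gaps), verifies in a few lines that $\|\mathcal{G}^{k+1}\|\le\widetilde{c}\,d_k$ and $\mathcal{L}^k-\mathcal{L}^{k+1}\ge\tfrac{\varrho}{2N}d_k^2$ with $\varrho=\min_i\{a_i,b_i\}$, and then \emph{invokes} the abstract convergence theorem \cite[Theorem~10]{ochs2019unifying} rather than rederiving the summability argument. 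You instead keep $d_k:=\sum_{i=1}^N\sqrt{\D(\bm x^{k,i},\bm x^{k,i-1})}$ and unfold the proof of that abstract theorem by hand, arriving at the two-step recursion $d_{k+1}\le\tfrac{\epsilon}{2}(d_k+d_{k-1})+\tfrac{C}{2\epsilon}\Delta_k$ and absorbing for $\epsilon<1$. Your version is self-contained and makes the ``two-step memory'' absorption explicit (and you are right that this is the only nontrivial point beyond the non-inertial \cite{bolte2014proximal} argument); the paper's version is shorter but relies on the reader consulting \cite{ochs2019unifying}. One minor remark: your argument needs $\min_i a_i>0$, which you correctly flag as requiring the last inequality in \eqref{eq:assumAlphai} to be strict; the paper implicitly needs the same positivity (of $\varrho$) for its appeal to \cite{ochs2019unifying}.
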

	
	\begin{proof}
		Define the sequence $\seq{d_k}$ as
	   $
	    		d_k:=\sum_{i=1}^{N} \sqrt{\D(\bm x^{k,i},\bm x^{k,i-1})}+\sqrt{\D(\bm x^{k-1,i},\bm x^{k-1,i-1})} .
	    $
	    From \Cref{pro:subgradLowBound1} for $\widetilde{c}:=\max\set{\overline{c},\widehat{c}}$, we obtain
	    \begin{equation}\label{eq:subGradIequality0}
	    	\begin{array}{ll}
	    		\|\mathcal{G}^{k+1}\|&\leq \overline{c} \sum_{i=1}^{N}\sqrt{\D_{h}(\bm x^{k,i},\bm x^{k,i-1})}+\widehat{c}\sum_{i=1}^{N}\sqrt{\D_{h}(\bm x^{k-1,i},\bm x^{k-1,i-1})}\\
	    	&\leq \widetilde{c} \sum_{i=1}^{N} \left(\sqrt{\D(\bm x^{k,i},\bm x^{k,i-1})}+\sqrt{\D(\bm x^{k-1,i},\bm x^{k-1,i-1})}\right) = \widetilde{c} d_k.
	    		\end{array}  
	    \end{equation}
		Applying twice the root-mean square and arithmetic mean inequality
		we come to
	     \begin{equation}\label{eq:rmsamiInequality}
	     	\begin{array}{ll}
	        d_k 
	            &\leq \sqrt{N\sum_{i=1}^{N} \D(\bm x^{k,i},\bm x^{k,i-1})}+\sqrt{N\sum_{i=1}^{N} \D(\bm x^{k-1,i},\bm x^{k-1,i-1})}\\
	            &\leq \sqrt{2N\sum_{i=1}^{N} \left(\D(\bm x^{k,i},\bm x^{k,i-1})+\D(\bm x^{k-1,i},\bm x^{k-1,i-1})\right)}.
	          \end{array}  
	    \end{equation}
	    Then, it can be concluded from \Cref{cor:descentProp} and \eqref{eq:rmsamiInequality} that
	     \begin{align*}
	     \begin{array}{ll}
	         \mathcal{L}^k-\mathcal{L}^{k+1}&\geq 
	         \sum_{i=1}^N \left(a_i\D(\bm x^{k,i},\bm x^{k,i-1})+b_i \D(\bm x^{k-1,i},\bm x^{k-1,i-1})\right)\\
	         &\geq 
	         \varrho \sum_{i=1}^N \left(\D(\bm x^{k,i},\bm x^{k,i-1})+ \D(\bm x^{k-1,i},\bm x^{k-1,i-1})\right)\geq \tfrac{\varrho}{2N} d_k^2,
	         \end{array}
	    \end{align*}
	    where $\varrho:=\min\set{a_1,b_1,\ldots,a_N,b_N}$. Together with \eqref{eq:subGradIequality0} and \Cref{pro:chatClusterPoint11}, this implies that \cite[Assumption H]{ochs2019unifying} holds true with $a_k=\tfrac{\varrho}{2N}, b_k=1, b=\widetilde{c}, I=\set{1}, \varepsilon_k=0$. Therefore, since $\mathcal{L}$ is a proper lower semicontinuous K{{\L}} function, \cite[Theorem 10]{ochs2019unifying} yields that \Cref{thm:globConvergence1} holds true and the sequence $\seq{\bm x^k}$ converges to $\bm x^\star$ in which $(\bm x^\star,\bm x^\star)$ is a stationary point of the Lyapunov function $\mathcal{L}$ \eqref{eq:LyapunovFunc}, i.e., $0\in \partial \mathcal{L}(\bm x^\star,\bm x^\star)$. Finally, the result follows from the fact $\partial \mathcal{L}(\bm x^\star,\bm x^\star)=\left(\partial \Phi(\bm x^\star), 0\right)$.
	\end{proof}
	\vspace{-5mm}
	\subsection{Rate of convergence for {\L}ojasiewicz-type K{{\L}} functions}
	 We now investigate the convergence rate of the generated sequence  under K{{\L}} inequality of {\L}ojasiewicz-type at $x^\star$ ($\psi(s):=\frac{\kappa}{1-\theta} s^{1-\theta}$ with $\theta\in [0,1)$), i.e., there exists $\varepsilon>0$ such that
	 \begin{equation}\label{eq:LojaKL1}
	     |\varphi(\bm x)-\varphi^\star|^{\theta}\leq \kappa \dist(0,\partial \varphi(\bm x)) \quad \forall \bm x\in \ball{\bm x^\star}{\varepsilon}.
	 \end{equation}
	
	\begin{fact}[convergence rate of a sequence with positive elements]
		\label{fac:convRate1} \cite[Lemma 15]{bot2018proximal}
		    Let $\seq{s_k}$ be a monotonically decreasing sequence in $\R_+$ and let $\theta\in [0,1)$ and $\beta>0$. Suppose that $s_k^{2\theta}\leq\beta(s_k-s_{k+1})$ holds for all $k\in\N$.
		    Then, the following assertions hold:
		    \begin{enumerate}
		        \item If $\theta=0$, the sequences $\seq{s_k}$ converges in a finite time;
		        \item If $\theta\in(0,\nicefrac{1}{2}]$, there exist $\lambda>0$ and $\tau\in[0,1)$ such that $
		            0\leq s_k\leq \lambda \tau^k
		       $ for every $k\in\N$. 
		       
		        \item If $\theta\in(\nicefrac{1}{2},1)$, there exists $\mu>0$ such that $0\leq s_k\leq \mu k^{-\tfrac{1}{2\theta-1}}$ for every $k\in\N$
		       
		    \end{enumerate}
	\end{fact}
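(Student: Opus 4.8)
The statement is an abstract recursion lemma (indeed it is quoted as \cite[Lemma 15]{bot2018proximal}), so the plan is to prove it directly by treating the three regimes of $\theta$ separately, after two preliminary reductions. First I would observe that if $s_{k_0}=0$ for some $k_0$, then monotonicity and nonnegativity force $s_k=0$ for all $k\ge k_0$ and every claim holds trivially; hence in the estimates below I may assume $s_k>0$ throughout. Second, summing the hypothesis telescopically gives $\sum_{k} s_k^{2\theta}\le \beta\sum_k (s_k-s_{k+1})=\beta(s_0-\lim_k s_k)\le \beta s_0<\infty$, so that $s_k^{2\theta}\to 0$ and therefore (for $\theta>0$) $s_k\to 0$; in particular there is an index $k_0$ with $0<s_k\le 1$ for all $k\ge k_0$. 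For the case $\theta=0$ the hypothesis reads $1\le \beta(s_k-s_{k+1})$, i.e.\ each step decreases $s_k$ by at least $1/\beta>0$; since $\seq{s_k}$ is nonnegative this can occur only finitely often, so $s_k=0$ after at most $\lceil \beta s_0\rceil$ steps, giving assertion~(i).

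For $\theta\in(0,\tfrac12]$ I would exploit that $2\theta\le 1$: for $k\ge k_0$ we have $0<s_k\le1$, hence $s_k\le s_k^{2\theta}\le \beta(s_k-s_{k+1})$. Rearranging yields $s_{k+1}\le\bigl(1-\tfrac1\beta\bigr)s_k$, and since $\tau:=\max\{0,1-\tfrac1\beta\}\in[0,1)$ this is a geometric contraction valid from the index $k_0$ on (if $\beta<1$ the inequality even forces finite termination, which is stronger). Absorbing the finitely many terms with $k<k_0$ into the constant, I obtain $s_k\le\lambda\tau^k$ for a suitable $\lambda>0$, which is assertion~(ii).

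The delicate case is $\theta\in(\tfrac12,1)$. Here I would set $p:=2\theta\in(1,2)$ and study the auxiliary sequence $t_k:=s_k^{1-p}$ via the map $g(s)=s^{1-p}$ on $(0,\infty)$, which is decreasing and convex because $g''(s)=p(p-1)s^{-p-1}>0$. The tangent (convexity) estimate at the larger iterate $s_k$ gives $g(s_{k+1})-g(s_k)\ge g'(s_k)(s_{k+1}-s_k)=(p-1)s_k^{-p}(s_k-s_{k+1})$, and substituting the hypothesis $s_k-s_{k+1}\ge\tfrac1\beta s_k^{p}$ produces the uniform gain
\[
 t_{k+1}-t_k=g(s_{k+1})-g(s_k)\ \ge\ (p-1)s_k^{-p}\cdot\tfrac1\beta\, s_k^{p}\ =\ \tfrac{p-1}{\beta}\ >\ 0 .
\]
Summing from $0$ to $k-1$ and using $t_0=s_0^{1-p}>0$ yields $t_k\ge\tfrac{p-1}{\beta}k$, i.e.\ $s_k^{-(p-1)}\ge\tfrac{p-1}{\beta}k$; inverting and recalling $p-1=2\theta-1$ gives $s_k\le\mu\,k^{-1/(2\theta-1)}$ with $\mu=\bigl(\tfrac{p-1}{\beta}\bigr)^{-1/(2\theta-1)}$, which is assertion~(iii).

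The main obstacle is precisely this uniform lower bound on $t_{k+1}-t_k$ in the last case. The key point is to apply the convexity estimate at the \emph{larger} point $s_k$, so that the factor $s_k^{-p}$ from $g'(s_k)$ cancels exactly against the $s_k^{p}$ arising from the hypothesis; evaluating instead at $s_{k+1}$, or using a crude mean-value bound, would leave a residual power of $s_k$ and force the two-regime case distinction familiar from the Attouch--Bolte argument. The remaining work (convergence $s_k\to0$, the transition index $k_0$, and bookkeeping of the constants $\lambda,\tau,\mu$) is routine once this estimate is in hand.
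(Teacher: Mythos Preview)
The paper does not supply its own proof of this statement: it is recorded as a \emph{Fact} with a citation to \cite[Lemma~15]{bot2018proximal}, so there is nothing in the paper to compare your argument against. Your proof is correct and self-contained; the treatment of cases $\theta=0$ and $\theta\in(0,\tfrac12]$ is standard, and in the case $\theta\in(\tfrac12,1)$ your use of the convexity of $s\mapsto s^{1-2\theta}$ at the larger iterate $s_k$ is a clean way to obtain the uniform increment $t_{k+1}-t_k\ge(2\theta-1)/\beta$ without the two-subcase split that appears in some classical derivations (e.g.\ the Attouch--Bolte argument you allude to). The only bookkeeping points are the interpretation of $0^0$ when $\theta=0$ and the value of the bound at $k=0$ in assertion~(iii), both of which you have already flagged and handled by the preliminary reduction to $s_k>0$.
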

	
	Let $\seq{\mathcal{S}_k}$ given by $\mathcal{S}_k:=\mathcal{L}(\bm x^k,\bm x^{k-1})-\mathcal{L}(\bm x^\star,\bm x^\star)$. We next derive the \DEF{convergence rates} of $\seq{\bm x^k}$ and $\seq{\mathcal{S}_k}$ when $\mathcal{L}$ satisfies the K{{\L}} inequality of {\L}ojasiewicz type. 
	
	 \begin{thm}[convergence rate]
	 \label{thm:convRate1}
	     Let all assumptions of \Cref{pro:subgradLowBound1} and \Cref{ass:T} hold, and  $\seq{\bm x^k}$ converges to $\bm x^\star$. If $\mathcal{L}$ satisfies the K{{\L}} inequality of {\L}ojasiewicz type, then the following assertions hold:
	     \begin{enumerate}
	         \item if $\theta=0$, then the sequences $\seq{\bm x^k}$ and $\seq{\Phi(\bm x^k)}$ converge in a finite number of steps to $\bm x^\star$ and $\Phi(\bm x^\star)$, respectively;
	         \item if $\theta\in(0,\nicefrac{1}{2}]$, then there exist $\lambda_1>0$, $\mu_1>0$, $\tau, \overline \tau\in [0,1)$, and $\overline{k}\in\N$ such that 
	         \[
	             0\leq \|\bm x^k-\bm x^\star\|\leq \lambda_1 \tau^k, \quad 0\leq \mathcal{S}_k\leq \mu_1 \overline\tau^k\quad \forall k\geq \overline{k};
	         \]
	         \item if $\theta\in(\nicefrac{1}{2},1)$, then there exist $\lambda_2>0$, $\mu_2>0$, and $\overline{k}\in\N$ such that
	         \[
	             0\leq \|\bm x^k-\bm x^\star\|\leq \lambda_2 k^{-\tfrac{1-\theta}{2\theta-1}}, \quad 0\leq \mathcal{S}_k\leq \mu_2 k^{-\tfrac{1-\theta}{2\theta-1}} \quad \forall k\geq \overline{k}+1.
	         \]
	     \end{enumerate}
	 \end{thm}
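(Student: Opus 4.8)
The plan is to run the standard Łojasiewicz rate machine on the Lyapunov gap rather than on $\Phi$ directly, and only at the end transfer the rate to the iterates. First I would set $\mathcal{L}^\star:=\mathcal{L}(\bm x^\star,\bm x^\star)$ and study $\mathcal{S}_k=\mathcal{L}^k-\mathcal{L}^\star$. By \Cref{cor:descentProp} the sequence $\seq{\mathcal{L}^k}$ is nonincreasing, and by \Cref{pro:chatClusterPoint10} the limit $\bm x^k\to\bm x^\star$ together with $\mathcal{L}$ being constant equal to $\mathcal{L}^\star$ on $\omega(\bm x^0)$ forces $\mathcal{L}^k\downarrow\mathcal{L}^\star$. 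Hence $\seq{\mathcal{S}_k}$ is nonnegative, nonincreasing and tends to $0$. If $\mathcal{S}_k=0$ for some $k$ the analysis collapses to the trivial finite case; otherwise I assume $\mathcal{S}_k>0$ throughout.

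The core is a one-step recursion of the type required by \Cref{fac:convRate1}. Since $\bm x^k\to\bm x^\star$, for all $k\ge\overline{k}$ the pair $(\bm x^{k+1},\bm x^k)$ lies in the neighborhood where the Łojasiewicz-type inequality \eqref{eq:LojaKL1} holds for $\mathcal{L}$, giving $\mathcal{S}_{k+1}^{\theta}\le\kappa\,\dist(0,\partial\mathcal{L}(\bm x^{k+1},\bm x^k))\le\kappa\|\mathcal{G}^{k+1}\|$. I then bound $\|\mathcal{G}^{k+1}\|\le\widetilde{c}\,d_k$ with $\widetilde{c}=\max\set{\overline{c},\widehat{c}}$ and $d_k=\sum_{i=1}^N\bigl(\sqrt{\D(\bm x^{k,i},\bm x^{k,i-1})}+\sqrt{\D(\bm x^{k-1,i},\bm x^{k-1,i-1})}\bigr)$ via \Cref{pro:subgradLowBound1}, and I use the descent estimate $\mathcal{S}_k-\mathcal{S}_{k+1}=\mathcal{L}^k-\mathcal{L}^{k+1}\ge\tfrac{\varrho}{2N}d_k^2$ coming from \Cref{cor:descentProp} combined with \eqref{eq:rmsamiInequality}. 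Chaining these yields $\mathcal{S}_{k+1}^{2\theta}\le\kappa^2\widetilde{c}^2 d_k^2\le\beta(\mathcal{S}_k-\mathcal{S}_{k+1})$ with $\beta=2N\kappa^2\widetilde{c}^2/\varrho$, for all $k\ge\overline{k}$.

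This matches the hypothesis of \Cref{fac:convRate1} for the tail of $\seq{\mathcal{S}_k}$; the one-index gap between the power term $\mathcal{S}_{k+1}^{2\theta}$ and the decrement $\mathcal{S}_k-\mathcal{S}_{k+1}$ is absorbed by monotonicity (for $\theta\le\nicefrac12$ the estimate $\mathcal{S}_{k+1}^{2\theta}\ge\mathcal{S}_{k+1}$ on the small tail even produces a geometric contraction $\mathcal{S}_{k+1}\le\tfrac{\beta}{1+\beta}\mathcal{S}_k$ directly, while for $\theta>\nicefrac12$ one invokes $\mathcal{S}_{k+1}/\mathcal{S}_k\to1$). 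Thus \Cref{fac:convRate1} gives finite termination of $\seq{\mathcal{S}_k}$ when $\theta=0$, a bound $\mathcal{S}_k\le\mu\,\tau^k$ with $\tau\in[0,1)$ when $\theta\in(0,\nicefrac12]$, and $\mathcal{S}_k\le\mu\,k^{-1/(2\theta-1)}$ when $\theta\in(\nicefrac12,1)$.

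Finally I transfer the rate of $\seq{\mathcal{S}_k}$ to $\seq{\bm x^k}$. Using concavity of $\psi$, the Łojasiewicz inequality, and the same descent and subgradient estimates (exactly the finite-length computation underlying \Cref{thm:globConvergence}), the tail length is controlled by $\sum_{j\ge k}\|\bm x^{j+1}-\bm x^j\|\le C\psi(\mathcal{S}_k)=\tfrac{C\kappa}{1-\theta}\mathcal{S}_k^{1-\theta}$, and since $\|\bm x^k-\bm x^\star\|\le\sum_{j\ge k}\|\bm x^{j+1}-\bm x^j\|$ this gives $\|\bm x^k-\bm x^\star\|\le C'\mathcal{S}_k^{1-\theta}$. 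Substituting the three rates for $\mathcal{S}_k$ produces the stated bounds: geometric decay in case $(2)$, and in case $(3)$ the factor $1-\theta$ converts $k^{-1/(2\theta-1)}$ into $k^{-(1-\theta)/(2\theta-1)}$, the common (non-sharp) bound $k^{-(1-\theta)/(2\theta-1)}\ge k^{-1/(2\theta-1)}$ also being reported for $\mathcal{S}_k$ itself; for $\theta=0$ finite termination of $\mathcal{S}_k$ forces $d_k=0$, hence $\bm x^{k+1}=\bm x^k$ and $\Phi(\bm x^k)=\Phi(\bm x^\star)$ eventually. I expect the main obstacle to be this last transfer, i.e.\ the quantitative tail-length estimate governed by $\psi(\mathcal{S}_k)$, since that is precisely where the exponent is shifted by the factor $1-\theta$; the index bookkeeping in applying \Cref{fac:convRate1} is only a secondary technical point.
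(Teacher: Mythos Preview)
Your plan matches the paper's proof: both chain the descent estimate \eqref{eq:LyapunovIneq}, the subgradient bound \eqref{eq:subGradUpBound}, and the \L ojasiewicz inequality to obtain a recursion of the form $\mathcal{S}_k^{2\theta}\lesssim\mathcal{S}_{k-1}-\mathcal{S}_k$, feed this to \Cref{fac:convRate1}, and then transfer the rate to $\|\bm x^k-\bm x^\star\|$ via a tail-length bound driven by $\psi(\mathcal{S}_k)$.

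The one place where the paper is more careful than your sketch is the tail estimate. Because of the inertial term, $d_k$ involves Bregman distances at \emph{two} consecutive levels, and the recursion one actually gets is $d_k\le cN(\Delta_k-\Delta_{k+1})+\tfrac12\,d_{k-1}$ (with $\Delta_k=\psi(\mathcal{S}_k)$). Summing this leaves a residual first-step contribution $d_{k-1}$ that is not absorbed by the telescoping, so the paper arrives at $\|\bm x^k-\bm x^\star\|\le\nu\max\bigl\{\sqrt{\mathcal{S}_{k-1}},\,\psi(\mathcal{S}_{k-1})\bigr\}$ rather than $C\psi(\mathcal{S}_k)$ alone, and then does a short case split on $\theta$ to identify which of the two terms dominates. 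Your claimed bound $\|\bm x^k-\bm x^\star\|\le C'\mathcal{S}_k^{1-\theta}$ is therefore not quite what the finite-length argument delivers in this inertial setting. That said, since $\sqrt{\mathcal{S}_k}$ is itself geometrically small when $\theta\le\tfrac12$ and is dominated by $\mathcal{S}_k^{1-\theta}$ for small $\mathcal{S}_k$ when $\theta>\tfrac12$, the final rates are unaffected and your proposal goes through once this extra term is accounted for.
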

	
	 \begin{proof}
	     We first set $\varepsilon>0$ to be that a constant described in \eqref{eq:LojaKL1} and $x^k\in \ball{x^\star}{\varepsilon}$ for all $k\geq\tilde k$ and $\tilde k\in\N$. Let us define $\Delta_k:=\psi(\mathcal{L}(\bm x^k,\bm x^{k-1})-\mathcal{L}(\bm x^\star,\bm x^\star))=\psi(\mathcal{S}_k)$. Then, it follows from the concavity of $\psi$ and \Cref{ass:subgradIneq} that
	    \begin{align*}
	    \begin{array}{ll}
	        \Delta_k-\Delta_{k+1}&= \psi(\mathcal{S}_{k})-\psi(\mathcal{S}_{k+1})
	        \geq \psi'(\mathcal{S}_{k})(\mathcal{S}_{k}-\mathcal{S}_{k+1})\\
	        &=\psi'(\mathcal{S}_{k})(\mathcal{L}(\bm x^k, \bm x^{k-1})-\mathcal{L}(\bm x^{k+1},\bm x^k))
	        \geq \frac{\mathcal{L}(\bm x^k, \bm x^{k-1})-\mathcal{L}(\bm x^{k+1},\bm x^k)}{\dist(0,\partial \mathcal{L}(\bm x^k,\bm x^{k-1}))}\\
	        &\geq \frac{\sum_{i=1}^N \left(a_i\D(\bm x^{k,i},\bm x^{k,i-1})
	        +b_i \D(\bm x^{k-1,i},\bm x^{k-1,i-1})\right)}{\overline{c}\sum_{i=1}^{N} \sqrt{\D(\bm x^{k-1,i},\bm x^{k-1,i-1})}+ \widehat{c}\sum_{i=1}^{N} \sqrt{\D(\bm x^{k-2,i},\bm x^{k-2,i-1})}}\\
	        &\geq \frac{1}{c} ~ \frac{\sum_{i=1}^N \left(\D(\bm x^{k,i},\bm x^{k,i-1})
	        + \D(\bm x^{k-1,i},\bm x^{k-1,i-1})\right)}{\sum_{i=1}^{N} \left(\sqrt{\D(\bm x^{k-1,i},\bm x^{k-1,i-1})}+\sqrt{\D(\bm x^{k-2,i},\bm x^{k-2,i-1})}\right)},
	        \end{array}
	    \end{align*}
	with $c:=\nicefrac{\max\set{\overline c,\widehat c}}{\min\set{a_1,b_1,\ldots,a_N,b_N}}$. Using \eqref{eq:rmsamiInequality} and applying the arithmetic mean and geometric mean inequality, 
	it can be concluded that
	    \begin{equation}\label{eq:ineqakbk1}
	        \begin{array}{ll}
	           d_k
	            &\leq \sqrt{2N\sum_{i=1}^{N} \left(\D(\bm x^{k,i},\bm x^{k,i-1})+\D(\bm x^{k-1,i},\bm x^{k-1,i-1})\right)}\\
	            &\leq \sqrt{2c N\left(\Delta_k-\Delta_{k+1}\right)\sum_{i=1}^{N} \left(\sqrt{\D(\bm x^{k-1,i},\bm x^{k-1,i-1})}+\sqrt{\D(\bm x^{k-2,i},\bm x^{k-2,i-1})}\right)}\\
	            &\leq c N\left(\Delta_k-\Delta_{k+1}\right)+\tfrac{1}{2}\sum_{i=1}^{N} \left(\sqrt{\D(\bm x^{k-1,i},\bm x^{k-1,i-1})}+\sqrt{\D(\bm x^{k-2,i},\bm x^{k-2,i-1})}\right)
	        \end{array}
	    \end{equation}
	    We now define the sequences $\seq{a_k}$ and $\seq{b_k}$ given by
	    \begin{equation}\label{eq:akbk1}
	        p_{k+1}:=\sum_{i=1}^{N} \sqrt{\D(\bm x^{k,i},\bm x^{k,i-1})}+\sqrt{\D(\bm x^{k-1,i},\bm x^{k-1,i-1})},\, q_k=c N \left(\Delta_k-\Delta_{k+1}\right),\, \alpha:=\tfrac{1}{2},
	    \end{equation}
	    where
	    $
	        \sum_{i=1}^\infty q_k= 2cN \sum_{i=1}^\infty \left(\Delta_i-\Delta_{i+1}\right)
	        = \Delta_1-\Delta_\infty = \Delta_1<\infty
	    $. This and \eqref{eq:ineqakbk1} yield $p_{k+1}\leq \tfrac{1}{2} p_k+q_k$ for all $k\geq\tilde k$. Since $\seq{\Phi}$ is non-increasing, 
	     \begin{align*}
	         \sum_{j=k}^\infty p_{j+1}\leq \tfrac{1}{2} \sum_{j=k}^\infty (p_j-p_{j+1}+p_{j+1})+ 2cN\sum_{j=k}^\infty  \left(\Delta_j-\Delta_{j+1}\right)= \tfrac{1}{2}\sum_{j=k}^\infty p_{j+1}+\tfrac{1}{2} p_k+2cN \Delta_k.
	     \end{align*}
	  From the root-mean square, the arithmetic mean inequality, $\psi(\mathcal{S}_{k})\leq \psi(\mathcal{S}_{k-1})$, and \Cref{cor:descentProp}, this lead to 
	     \begin{equation}\label{eq:sumak1}
	         \begin{array}{ll}
	             \sum\limits_{j=k}^\infty p_{j+1}&\leq p_k+4cN \Delta_k = \sum\limits_{i=1}^{N} \left(\sqrt{\D(\bm x^{k-1,i},\bm x^{k-1,i-1})}+\sqrt{\D(\bm x^{k-2,i},\bm x^{k-2,i-1})}\right)+4cN \psi(\mathcal{S}_k)\\
	             &\leq \sqrt{N\sum_{i=1}^{N} \D(\bm x^{k,i},\bm x^{k,i-1})}+\sqrt{N\sum_{i=1}^{N} \D(\bm x^{k-1,i},\bm x^{k-1,i-1})}+4cN \psi(\mathcal{S}_k)\\
	            &\leq \sqrt{2N\sum_{i=1}^{N} \left(\D(\bm x^{k,i},\bm x^{k,i-1})+\D(\bm x^{k-1,i},\bm x^{k-1,i-1})\right)}+4cN \psi(\mathcal{S}_k)\\
	             &\leq \sqrt{\nicefrac{2N}{\varrho}} \sqrt{\mathcal{S}_{k-1}-\mathcal{S}_{k}}+4cN \psi(\mathcal{S}_{k-1}),
	         \end{array}
	     \end{equation}
		with $\varrho:=\min\set{a_1,b_1,\ldots,a_N,b_N}$.  
		Since $\D(\cdot,\cdot)\geq 0$, for $i=1,\ldots,N$, it holds that
	     \begin{align*}
	     \begin{array}{ll}
	       &  \|x_i^k-x_i^\star\|\leq\|x_i^{k+1}-x_i^k\|+\|x_i^{k+1}-x_i^\star\|\leq\ldots\leq\sum_{j=k}^\infty \|x_i^{j+1}-x_i^j\|\\
	         &\leq \sum_{j=k}^\infty \sqrt{\tfrac{2}{\sigma_i}\D(\bm x^{k-1,i},\bm x^{k-1,i-1})}\leq \sqrt{\tfrac{2}{\sigma_i}}\sum_{j=k}^\infty \left(\sqrt{\D(\bm x^{k-1,i},\bm x^{k-1,i-1})}+\sqrt{\D(\bm x^{k-2,i},\bm x^{k-2,i-1})}\right).
	         \end{array}
	     \end{align*}
	     Combining this with \eqref{eq:sumak1} and setting $\rho:=\max\set{\sqrt{\nicefrac{2}{\sigma_1}},\ldots,\sqrt{\nicefrac{2}{\sigma_N}}}$, we come to
	     \begin{align*}
	     \begin{array}{ll}
	     	\sum_{i=1}^N\|x_i^k-x_i^\star\|&\leq \rho\sum_{j=k}^\infty \sum_{i=1}^N\left(\sqrt{\D(\bm x^{k-1,i},\bm x^{k-1,i-1})}+\sqrt{\D(\bm x^{k-2,i},\bm x^{k-2,i-1})}\right)\\
	     	&\leq \rho \sqrt{\nicefrac{2N}{\varrho}}  \sqrt{\mathcal{S}_{k-1}-\mathcal{S}_{k}}+4c\rho N \psi(\mathcal{S}_{k-1}),
	     	\end{array}
	     \end{align*}
	     which consequently yields
	     \begin{equation}\label{eq:xkykUpperBound}
	         \|x_i^k-x_i^\star\|\leq \nu \max\set{\sqrt{\mathcal{S}_{k-1}},\psi(\mathcal{S}_{k-1})}\quad i=1,\ldots,N,
	     \end{equation}
	     with $\nu:=\rho \sqrt{\nicefrac{2N}{\varrho}}+4c\rho N$ and $\psi(s):=\frac{\kappa}{1-\theta} s^{1-\theta}$. Furthermore, the nonlinear equation
	         $\sqrt{\mathcal{S}_{k-1}}-\frac{\kappa}{1-\theta} \mathcal{S}_{k-1}^{1-\theta}=0$ has a solution at  $\mathcal{S}_{k-1}=\left(\nicefrac{(1-\theta)}{\kappa}\right)^{\tfrac{2}{1-2\theta}}$. For $\hat k\in\N$ and $k\geq \hat k$, we assume \eqref{eq:xkykUpperBound} holds and 
	     $
	         \mathcal{S}_{k-1}\leq \left(\frac{\kappa}{1-\theta}\right)^{\tfrac{2}{1-2\theta}}.
	   $
	     Two cases are recognized: (a) $\theta\in(0,\nicefrac{1}{2}]$; (b) $\theta\in(\nicefrac{1}{2},1)$. In Case (a), if $\theta\in(0,\nicefrac{1}{2})$, then $\psi(\mathcal{S}_{k-1})\leq\sqrt{\mathcal{S}_{k-1}}$. For $\theta=\nicefrac{1}{2}$, we get $\psi(\mathcal{S}_{k-1})=\tfrac{\kappa}{1-\theta}\sqrt{\mathcal{S}_{k-1}}$, which implies $\max\set{\sqrt{\mathcal{S}_{k-1}},\psi(\mathcal{S}_{k-1})}=\max\set{1,\tfrac{\kappa}{1-\theta}} \sqrt{\mathcal{S}_{k-1}}$. Then, $\max\set{\sqrt{\mathcal{S}_{k-1}},\psi(\mathcal{S}_{k-1})}\leq\max\set{1,\tfrac{\kappa}{1-\theta}} \sqrt{\mathcal{S}_{k-1}}$. In Case (b), it holds that 
	     $\psi(\mathcal{S}_{k-1})\geq\sqrt{\mathcal{S}_{k-1}}$,
	     i.e., $\max\set{\sqrt{\mathcal{S}_{k-1}},\psi(\mathcal{S}_{k-1})}= \tfrac{\kappa}{1-\theta} \mathcal{S}_{k-1}^{1-\theta}$. Combining both cases, for all $k\geq\overline k:=\max \set{\tilde k, \hat k}$, we end up with
	     \begin{equation}\label{eq:xkUpperBound}
	         \|x_i^k-x_i^\star\|\leq \left\{
	         \begin{array}{ll}
	             \nu \max\set{1,\tfrac{\kappa}{1-\theta}} \sqrt{\mathcal{S}_{k-1}} &~~~ \mathrm{if}\  \theta\in(0,\nicefrac{1}{2}], \vspace{1mm}\\
	             \nu \tfrac{\kappa}{1-\theta} \mathcal{S}_{k-1}^{1-\theta} &~~~ \mathrm{if}\ \theta\in(\nicefrac{1}{2},1).
	         \end{array}
	         \right. 
	     \end{equation}
	    
	On the other hand,     
	it follows from \Cref{cor:descentProp} that
	     \begin{align*}
	     \begin{array}{ll}
	        & \mathcal{S}_{k-1}-\mathcal{S}_{k}\\
	        &=\mathcal{L}(\bm x^{k-1},\bm x^{k-2})-\mathcal{L}(\bm x^{k},\bm x^{k-1}) \geq \varrho \sum_{i=1}^N \left(\D_{h}(\bm x^{k-1,i},\bm x^{k-1,i-1}+\D_{h}(\bm x^{k-2,i},\bm x^{k-2,i-1})\right)\\
	         &\geq \tfrac{\varrho}{2N} \left(\sqrt{\D(\bm x^{k-1,i},\bm x^{k-1,i-1})}+\sqrt{\D(\bm x^{k-2,i},\bm x^{k-2,i-1})}\right)^2\\
	         &\geq \tfrac{\varrho}{2N\widetilde{c}^2}\|(\mathcal{G}_1^{k},\ldots,\mathcal{G}_N^{k})\|^2
	         \geq \tfrac{\varrho}{2N\widetilde{c}^2} \dist(0,\partial \mathcal{L}(\bm x^k,\bm x^{k-1}))^2\geq \tfrac{\varrho}{2N\widetilde{c}^2\kappa^2} \mathcal{S}_{k-1}^{\theta}=c_2 \mathcal{S}_{k-1}^{\theta},
	         \end{array}
	     \end{align*}
	    where $c_2:= \tfrac{\varrho}{2N\widetilde{c}^2\kappa^2}$. The results then follow from $\mathcal{S}_k\to 0$,  \eqref{eq:xkUpperBound} and \Cref{fac:convRate1}. 
	 \end{proof}
		\vspace{-4mm}	
	\section{Application to symmetric nonnegative matrix tri-factorization}\label{sec:SNMF}
	
	A natural way of analyzing large data sets is finding an effective way to represent them using dimensionality reduction methodologies.
	\DEF{Nonnegative matrix factorization} (NMF) is one such technique that has received much attention
	in the last few years; see, e.g., \cite{cichocki2009nonnegative,gillis2014nonnegative} and the references therein. In order to extract hidden and important features from data, NMF decomposes the data matrix into two factor matrices (usually much smaller than the original data matrix) by imposing componentwise nonnegativity and (possibly) other  constraints such as sparsity to take prior information into account. 
	More precisely, let the data matrix be $X=[x_1,x_2,\ldots,x_n]\in\R_+^{m\times n}$ where each $x_i$ represents some data point. 
	NMF seeks a decomposition of $X$ into a nonnegative $n\times r$ basis matrix $U=[u_1,u_2,\ldots,u_r]\in\R_+^{m\times r}$ and a  nonnegative $r\times n$ coefficient matrix $V=[v_1,v_2,\ldots,v_r]^T\in\R_+^{r\times n}$ such that 
	\begin{equation}\label{eq:nmfEq}
	    X\approx UV,    
	\end{equation}
	where $\R_+^{m\times n}$ is the set of $m\times n$ nonnegative matrices. Extensive research has been carried out on variants of NMF, and most studies have focused on algorithmic developments, but with very limited convergence theory. This motivates us to study the application of \refBIBPA[] to a variant of NMF, namely SymTriNMF; 
	see~\eqref{eq:usnmtf} for the formulation of SymTriNMF as an optimization problem.  
	
	One popular application of SymTriNMF is community detection. 
	Let $X$ be the adjacency matrix of graph so that $X_{ij} = 1$ if item $i$ is connected to item $j$, and $X_{ij}= 0$ otherwise. 
	Let also $X \approx UVU^T$ be a SymTriNMF decomposition of $X$. Each column of $U$ corresponds to a community, that is, to a subset of items highly connected. In other words, the entry $U_{jk}$ of $U$ indicates the membership of item $j$ within community $k$, 
	and $U_{jk} > 0$ if $j$ belongs to community $k$.  
	The $r$-by-$r$ matrix $V$ indicates the relationship between communities, that is, whether the items within two communities are likely to interact: 
	 $V_{kp}$ is the "strength" of the interaction between the $k$th and $p$th communities. We have 
	$ X \approx \sum_{k=1}^r \sum_{p=1}^r U_{:k} V_{k,p} U_{:p}^T, $
	so that $X$ is decomposed via the sum of $r^2$ rank-one factors corresponding to the $r$ communities and their interactions; see~\cite{wang2011simultaneous,zhang2012overlapping} for more details. Note that SymTriNMF is closely related to the mixed membership stochastic blockmodel~\cite{airoldi2008mixed}.  
	
	Given $U^k$ and $V^k$, we next derive the closed-form solutions for $U^{k+1}$ and $V^{k+1}$.
	
	\begin{thm}[closed-form solutions of the subproblem \eqref{eq:xik1} for SymTriNMF]
	\label{thm:iterBIBPA}
	    Let $h_1$ and $h_2$ be the kernel functions given in \eqref{eq:kernelh4h21} and \eqref{eq:kernelh4h22} and $U^k$ and $V^k$ are given. Then,
	    \begin{enumerate}
	        \item \label{thm:iterBIBPA1}
	        the iteration $U^{k+1}$ of the subproblem \eqref{eq:xik1} is given by
	            \begin{equation}\label{eq:uk01}
	                U^{k+1}= \tfrac{1}{t_k} \max\set{\frac{1}{\gamma_1^k }\big(\nabla_U h_1(U^k,V^k)-\gamma^k_1\nabla_U f(U^k,V^k)+\alpha_1^k(U^k-U^{k-1})\big),0}
	            \end{equation}
	            with 
	            \begin{align*}
	                      		\nabla_U f(U^k,V^k)&=-XU^k(V^k)^T-X^TU^kV^k+U^kV^k(U^k)^TU^k(V^k)^T+U^k(V^k)^T(U^k)^TU^kV^k,\\
	            		 \nabla_U h_1(U^k,V^k)&=\left(a_1 \|U^k\|_F^2 \|V^k\|_F^2+b_1 (\|X\|_F~\|V^k\|_F + \varepsilon_1) \right)U^k,
	            \end{align*}
			 and
			\begin{align}\label{eq:ckSol}
			t_k=	 \frac{\tau_1}{3}+\sqrt[3]{\frac{\tau_2+\sqrt{\Delta_1}}{2}+\frac{\tau_1^{3}}{27}}+\sqrt[3]{\frac{\tau_2-\sqrt{\Delta_1}}{2}+\frac{\tau_1^{3}}{27},}
			\end{align}
		where $	\tau_1=b_1(\|X\|_F \|V^k\|_F+\varepsilon_1), \quad
		\tau_2=a_1\|V^k\|_F^2\|\max\set{G^k,0}\|_F^2, \quad
		\Delta_1=\tau_2^2 +\frac{4}{27}\tau_2^2\tau_1^3  
		$ with
		$
		G^k:=\tfrac{1}{\gamma^k_1}\left(\nabla_U h_1(U^k,V^k)-\gamma^k_1\nabla_U f(U^k,V^k)+\alpha_1^k(U^k-U^{k-1})\right). 
		$

	        \item \label{thm:iterBIBPA2}
	        for  $\eta_k:=a_2\|U^{k+1}\|^4+\varepsilon_2$, the iteration $V^{k+1}$ of the subproblem \eqref{eq:xik1} is given by
	            \begin{equation}\label{eq:vk01}
	                 V^{k+1}= \max\set{V^k-\tfrac{1}{\eta_k}\Big(\alpha_2^k(V^k-V^{k-1})-\gamma_2^k\nabla_V f(U^{k+1},V^k)\Big),0},
	            \end{equation}
	            with $\nabla_V f(U^{k+1},V^k)=(U^{k+1})^T X U^{k+1} + (U^{k+1})^T U^{k+1} V^k (U^{k+1})^T U^{k+1}.$
	        
	    \end{enumerate}
	    \end{thm}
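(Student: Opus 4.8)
The plan is to observe that each of the two updates is a single--block Bregman proximal step \eqref{eq:tx} in which $g_i$ is the indicator of the nonnegative orthant, and that the kernel $h_i$ from \eqref{eq:kernelh4h21}--\eqref{eq:kernelh4h22}, once the other block is frozen, depends on the active block only through its Frobenius norm. Since $h_1(\cdot,V^k)$ and $h_2(U^{k+1},\cdot)$ are strongly convex and the nonnegativity indicator is convex, each subproblem is strongly convex and hence has a unique minimizer; it therefore suffices to solve each subproblem directly and read off the closed form. I would treat the easier $V$--block first and then the $U$--block.

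For the $V$--update I freeze $U=U^{k+1}$. Then $h_2(U^{k+1},\cdot)=\tfrac{\eta_k}{2}\|\cdot\|_F^2$ is a positive multiple of the squared Frobenius norm with $\eta_k=a_2\|U^{k+1}\|^4+\varepsilon_2$, so its block Bregman distance collapses to the Euclidean one, $\D_{h_2}(U^{k+1},V;U^{k+1},V^k)=\tfrac{\eta_k}{2}\|V-V^k\|_F^2$. Substituting into \eqref{eq:tx} and discarding the terms independent of $V$, the subproblem becomes the minimization over $V\ge 0$ of a strongly convex quadratic with the identity metric scaled by $\eta_k/\gamma_2^k$. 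Its unconstrained stationary point is read off by setting the gradient to zero, and because the constraint set is the nonnegative orthant and the metric is a multiple of the identity, the constrained minimizer is the componentwise projection $\max\set{\cdot,0}$ of that point, yielding the closed form \eqref{eq:vk01}, with $\nabla_V f(U^{k+1},V^k)$ obtained by differentiating $\tfrac12\|X-UVU^T\|_F^2$ in $V$.

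For the $U$--update I freeze $V=V^k$ and first absorb the linear part of the model---namely $\nabla_U f(U^k,V^k)$, the inertial term $-\tfrac{\alpha_1^k}{\gamma_1^k}(U^k-U^{k-1})$, and $-\tfrac{1}{\gamma_1^k}\nabla_U h_1(U^k,V^k)$---into the single matrix $G^k$ of the statement, so that \eqref{eq:tx} reduces to
\[
\min_{U\ge 0}\ -\innprod{G^k}{U}+\tfrac{a_1\|V^k\|_F^2}{4\gamma_1^k}\|U\|_F^4+\tfrac{\tau_1}{2\gamma_1^k}\|U\|_F^2 .
\]
The crucial structural point is that the objective couples the \emph{direction} of $U$ only through $\innprod{G^k}{U}$, while the two remaining terms are increasing in $\|U\|_F$ alone. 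Hence, at fixed radius, the optimal $U$ maximizes $\innprod{G^k}{U}$ over the nonnegative orthant; splitting $G^k=\max\set{G^k,0}-\max\set{-G^k,0}$ and using $U\ge 0$ together with Cauchy--Schwarz shows this maximizer is a nonnegative multiple of $P:=\max\set{G^k,0}$, with $\innprod{G^k}{P}=\|P\|_F^2$. This is precisely the direction appearing in \eqref{eq:uk01} and it decouples the matrix problem into a scalar one in the radius.

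Writing the minimizer as $U=\tfrac{1}{t_k}P$ as in \eqref{eq:uk01} and imposing stationarity of the resulting strictly convex one--dimensional objective yields a cubic $\gamma_1^k t_k^3-\tau_1 t_k^2-\tau_2=0$, with $\tau_1=b_1(\|X\|_F\|V^k\|_F+\varepsilon_1)$ and $\tau_2=a_1\|V^k\|_F^2\|P\|_F^2$ as in the statement. Since $\tau_1,\tau_2\ge 0$, Descartes' rule of signs (equivalently, monotonicity of the scalar derivative) gives a unique positive root, and applying Cardano's formula to the depressed cubic obtained through $t_k\mapsto y+\tau_1/3$ produces the closed form \eqref{eq:ckSol}; combining with the direction $P$ gives \eqref{eq:uk01}, and $\nabla_U f(U^k,V^k)$ follows by differentiation. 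I expect the main obstacle to lie in this last block: justifying the direction/radius decoupling rigorously (in particular that the constrained maximizer of the linear term is the scaled $\max\set{G^k,0}$), disposing of the degenerate case $P=0$ (where $\tau_2=0$, $t_k=\tau_1$, and $U^{k+1}=0$ consistently), and carefully tracking the step size $\gamma_1^k$ and the kernel constants so that the stationarity cubic matches the depressed form solved by \eqref{eq:ckSol}.
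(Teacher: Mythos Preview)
Your proposal is correct and, for the $V$--block, identical to the paper's argument. For the $U$--block the two proofs differ only in how they pin down the \emph{direction} of $U^{k+1}$: the paper writes the KKT system with the normal cone of $\{U\ge 0\}$ and does an entrywise case split on the sign of $G^k_{ij}$ to conclude $t_kU^{k+1}=\max\{G^k,0\}$, whereas you reach the same conclusion geometrically by fixing $\|U\|_F$ and maximizing the linear term over the nonnegative orthant via the splitting $G^k=\max\{G^k,0\}-\max\{-G^k,0\}$ and Cauchy--Schwarz. Both routes then collapse to the identical scalar cubic in $t_k$ solved by Cardano's formula; your concern about tracking $\gamma_1^k$ is warranted, since the paper's optimality condition silently drops the factor $1/\gamma_1^k$ in front of $\nabla_U h_1$, so the cubic you derive, $\gamma_1^k t_k^3-\tau_1 t_k^2-\tau_2=0$, is the correct one and \eqref{eq:ckSol} should be read accordingly.
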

	
	\begin{proof}
		Setting $g_1:=\delta_{U\geq 0}$ and $f(U,V)=\tfrac{1}{2}\|X-UVU^T\|_F^2$, it follows from \eqref{eq:xik1} that
		\begin{equation}\label{eq:uk1proof}
			\begin{array}{ll}
			&U^{k+1}=  \argmin_{U\in\R^{m\times r}} \Big\{\innprod{\nabla_U f(U^k,V^k)-\tfrac{\alpha_1^k}{\gamma^k_1}(U^k-U^{k-1})}{U-U^k} \\
			&~~~~~~~~~~~~~~~~~~+\tfrac{1}{\gamma_1^k}\mathbf D_{h_1}((U,V^k),(U^k,V^k))+ g_1(U)\Big\}\\
			&=\argmin_{U\geq 0} \Big\{\tfrac{1}{\gamma^k_1}\innprod{\gamma^k_1\nabla_U f(U^k,V^k)-\nabla_U h_1(U^k,V^k)-\alpha_1^k(U^k-U^{k-1})}{U} +\tfrac{1}{\gamma_1^k}h_1(U,V^k)\Big\}.
			\end{array}
		\end{equation}
		By \cite[Corollary 3.5]{tam2017regularity}, the normal cone of the nonnegativity constraint $U\geq 0$ is 
		$
		\mathcal{N}_{U\geq 0}(U^k)=\set{P\in\R^{m\times r}\mid U^k\odot P=0,\ P\leq 0}
		$
		where $U^k\odot P$ denotes the \DEF{Hadamard products} given pointwise by $(U^k\odot P)_{ij}:=U_{ij}^kP_{ij}$ for $i\in{1,\ldots,m}$ and $j\in{1,\ldots,r}$. The first-order optimality conditions for the subproblem \eqref{eq:uk1proof} yields that 
		$G^k-(a_1\|U^{k+1}\|_F^2 \|V^k\|_F^2+b_1 (\|X\|_F \|V^k\|_F+\varepsilon_1))U^{k+1}\in\mathcal{N}_{U\geq 0}(U^{k+1})$. 
		
		We now consider two cases: (i) $G_{ij}\leq 0$; (ii) $G_{ij}> 0$. In Case (i), we have 
		$$P_{ij}=G_{ij}^k-(a_1\|U^{k+1}\|_F^2\|V^k\|_F^2+b_1 (\|X\|_F \|V^k\|_F+\varepsilon_1))U_{ij}^{k+1}\leq 0,$$ 
		hence $U_{ij}^{k+1}=0$.
	In Case (ii), if $U_{ij}^{k+1}=0$, then $P_{ij}=G_{ij}^k>0$, which contradicts $P\leq 0$; hence  $G_{ij}^k-(a_1\|U^{k+1}\|_F^2\|V^k\|_F^2+b_1 (\|X\|_F \|V^k\|_F+\varepsilon_1))U_{ij}^{k+1}=0$. 
	Combining both cases, we get
		$
			(a_1\|U^{k+1}\|_F^2\|V^k\|_F^2+b_1 (\|X\|_F \|V^k\|_F+\varepsilon_1))U^{k+1}=\proj_{G\geq 0}(G^k).
		$
	Denote $t_k=	a_1\|U^{k+1}\|_F^2\|V^k\|_F^2+b_1 \|X\|_F \|V^k\|_F$, then $ \|U^{k+1}\|_F^2 = (t_k- b_1 \|X\|_F \|V^k\|_F)/(a_1 \|V^k\|_F^2)$.  We have $ t_k^3-b_1 \|X\|_F \|V^k\|_F t_k^2-a_1  \|V^k\|_F^2 \|\proj_{G\geq 0}(G^k)\|_F^2=0.  $
	Note that the third order polynomial equation $y^{2}(y-a)=c$ has the unique real solution $y=\frac{a}{3}+\sqrt[3]{\frac{c+\sqrt{\Delta}}{2}+\frac{a^{3}}{27}}+\sqrt[3]{\frac{c-\sqrt{\Delta}}{2}+\frac{a^{3}}{27},}$  where $\Delta=c^{2}+\frac{4}{27}ca^{3}$. Then we get \eqref{eq:ckSol}. Finally, the result follows from $U^{k+1}=\frac{\proj_{G\geq 0}(G^k)}{t_k} $.
	
	By setting $g_2:=\delta_{V\geq 0}$ and invoking \eqref{eq:xik1}, we get
		\begin{equation*}
		\label{eq:vk1proof}
			\begin{array}{ll}
			& V^{k+1}=  \argmin_{V\in\R^{r\times r}} \Big\{\innprod{\nabla_V f(U^{k+1},V^k)-\tfrac{\alpha_2^k}{\gamma_2^k}(V^k-V^{k-1})}{V-V^k} \\
			&~~~~~~~~~~~~~~~~~~+\tfrac{1}{\gamma^k_2}\mathbf D_{h_2}((U^{k+1},V),(U^{k+1},V^k))+ g_2(V)\Big\}\\
			&=\argmin_{V\geq 0}\frac{1}{\gamma_2^k} \innprod{\gamma_2^k\nabla_V f(U^{k+1},V^k)-\alpha_2^k(V^k-V^{k-1})-\nabla h_2(U^{k+1},V^k)}{V}+ \frac{1}{\gamma_2^k}h_2 (U^{k+1},V) \\
			&=\argmin_{V\geq 0} 
			\Big\{
			\big\|V-\frac{1}{a_2\|U^{k+1}\|^4+\varepsilon_2}\big( \alpha_2^k(V^k-V^{k-1})+ \nabla h_2(U^{k+1},V^k) -\gamma_2^k\nabla_V f(U^{k+1},V^k) \big)\big\|_F^2 		\Big\}\\
			&= \proj_{V\geq 0}\left(V^k-\tfrac{1}{\eta_k}\big(\alpha_2^k(V^k-V^{k-1})-\gamma_2^k\nabla_V f(U^{k+1},V^k)\big)\right),
			\end{array}
		\end{equation*}
	which proves \eqref{eq:vk01}.
	\end{proof}

		\vspace{-4mm}	
	\section{Final remarks}\label{sec:conclusion} 
	The descent lemma is a key factor for analyzing the first-order methods in both Euclidean and non-Euclidean settings. Owing to the notion of block relative smoothness, it was shown that the descent lemma is still valid for each block of variables for structured nonsmooth nonconvex problems with non-Lipschitz gradients. Based on this development, \refBIBPA[] was introduced to deal with such problems, and it was shown to be globally convergent for K{{\L}} functions and its convergence rate was also studied. Besides, it was shown that the objective of the symmetric nonnegative matrix tri-factorization (SymTriNMF) problem is block relatively smooth, and the corresponding subproblems can be solved in closed forms. To our knowledge, \refBIBPA[] is the first algorithm with rigorous theoretical guarantee of convergence for this problem. We emphasize that the main objective of this paper is to provide a theoretical and algorithmic framework that can handle block structured nonsmooth nonconvex problems under the block relative smoothness assumption. Hence, a comprehensive numerical experiments  for such structured problems are postponed to a future work. 
    
	\ifarxiv
		\bibliographystyle{plain}
	\else
		\phantomsection
		\addcontentsline{toc}{section}{References}
		\bibliographystyle{spmpsci}
	\fi
	\bibliography{Bibliography}

\end{document}